\newtheorem*{theorem*}{Theorem}
\newtheorem{theorem}{Theorem}[section]
\newtheorem{lemma}[theorem]{Lemma}
\newtheorem{proposition}[theorem]{Proposition}
\newtheorem{corollary}[theorem]{Corollary}
\theoremstyle{definition}
\newtheorem{definition}{Definition}[section] 
\newtheorem{remark}{Remark}[section]
\newtheorem{example}[remark]{Example}
\def\Z{{\mathbb Z}}
\def\Q{{\mathbb Q}}
\def\calC{{\mathcal C}}
\def\calF{{\mathcal F}}
\def\C{{\mathbb C}}
\def\qdim{q{\rm dim \,}}
\def\Hom{\operatorname{Hom}}
\def\Ext{\operatorname{Ext}}
\def\dim{\operatorname{dim}}
\def\im{\operatorname{im}}
\def\id{{\mbox{id}}}
\def\G{{\Gamma}}
\def\-{\setminus}
\def\ld{{}^{\vee}\!}
\begin{document}

\newdimen\captionwidth\captionwidth=\hsize

\title{Koszul algebras and Flow lattices}

\author{Zsuzsanna Dancso and Anthony Licata}
\thanks{}
\address{Zsuzsanna Dancso,
  School of Mathematics and Statistics F07\\
  The Universit of Sydney\\
  Sydney NSW 2006\\
  Australia
}
\email{zsuzsanna.dancso@sydney.edu.au}
\urladdr{https://sydney.edu.au/science/people/zsuzsanna.dancso.php}
\address{Anthony Licata,
Mathematical Sciences Institute\\
Hanna Neumann Building\\
Australian National University\\
Acton ACT 2601\\
Australia
}
\email{amlicata@gmail.com}
\urladdr{https://maths-people.anu.edu.au/~licatat}

\date{\today}

\maketitle

\begin{abstract}
We provide a homological algebraic realization of the lattices of integer cuts and integer flows of graphs.
To a finite 2-edge-connected graph $\G$ with a spanning tree $T$, we associate a finite dimensional Koszul algebra $A_{\G,T}$.
Under the construction, planar dual graphs with dual spanning trees are associated Koszul dual algebras.
The Grothendieck group of the category of finitely-generated $A_{\G,T}$ modules is isomorphic to the Euclidean lattice $\Z^{E(\G)}$, and we
describe the sublattices of integer cuts and integer flows on $\G$ in terms of the representation theory of $A_{\G,T}$.  
The grading on $A_{\G,T}$ gives rise to $q$-analogs of the lattices of 
integer cuts and flows; these $q$-lattices depend non-trivially on the choice of spanning tree.  We give a $q$-analog of the matrix-tree theorem, and prove that the $q$-flow lattice of $(\G_1,T_1)$ is isomorphic to the $q$-flow lattice of $(\G_2,T_2)$ if and only if there is a cycle preserving bijection from the edges of $\G_1$ to the edges of $\G_2$ taking the spanning tree $T_1$ to the spanning tree $T_2$.  This gives a $q$-analog of a classical theorem of Caporaso-Viviani and Su-Wagner.
\end{abstract}

\tableofcontents

\section{Introduction}
\subsection{Lattices and Grothendieck groups}
An {\em integer lattice} is a finitely generated free Abelian group with a symmetric, $\Z$-valued bilinear form, sometimes required to be non-degenerate.  Integer lattices appear naturally in the representation theory of finite dimensional algebras:  if $A$ is a finite dimensional algebra, the Grothendieck group of the additive category of finitely-generated projective left $A$ modules, denoted $G_0$, is a free abelian group with basis the distinct isomorphism classes of indecomposable projective modules.  The pairing $([P],[Q]) = \dim\Hom(P,Q)$ is bilinear, and though it is not always symmetric, it sometime is -- e.g. when $A$ is a symmetric algebra.  In this way many important lattices in Lie theory arise in geometric representation theory as Grothendieck groups of additive categories.  

It is interesting to explore the relationship between properties of lattices and structures in homological algebra. To give an example, denote by $K_0$ the Grothendieck group of the abelian category of all finite-dimensional $A$-modules; $K_0$ is also a free abelian group, which by the Jordan-Holder theorem is spanned by the classes of the simple modules.  The $\Hom$ pairing identifies $K_0$ as the dual of $G_0$, at least as an abelian group.  If $G_0$ is a lattice, it is tempting to try to identify $K_0$ as the {\em dual lattice}, after endowing $K_0$ with the {\em Euler form}: $$\langle [M],[N]\rangle := \sum_{i=0}^\infty (-1)^i \dim \Ext^i_{A(B)} (M,N).$$ In general this alternating sum need not converge; however, if $A$ has finite homological dimension, the Euler form gives a bilinear pairing on $K_0$. Moreover, since in that case every simple module has a finite length projective resolution, the natural map $G_0 \longrightarrow K_0$ is an isomorphism of lattices, so that $G_0$ is a {\em unimodular lattice}. Thus, the lattice-theoretic notion of unimodularity is captured by a categorical notion of homological finiteness.

In lattice theory, unimodular lattices are often constructed by {\em gluing} non-unimodular lattices of smaller rank (see \cite[Chapters 4.3, 16]{CS}).  Perhaps the simplest example, which is the only one we consider in this paper, is when the unimodular lattice is the Euclidean lattice, and it is obtained by gluing together two smaller rank lattices.  This example is of interest in graph theory: given a 2-edge-connected graph $\G$ with edge set $E(\G)$, the lattice of integer flows $\calF(\G)$ and the lattice of integer cuts 
$\calC(\G)$ glue to form the Euclidean lattice $\Z^{E(\G)}$; in particular, $\calF(\G)$ and $\calC(\G)$ appear as mutual orthogonal complements inside the Euclidean lattice $\Z^{E(\G)}$.  

The main construction of the current paper gives a homological algebra lift of this particular occurrence of lattice gluing.  
More specifically, given a 2-connected graph $\G$ with a choice of a spanning tree $T$, we construct a {\em bipartite algebra} $A_{\G,T}$, as the quotient of a path algebra.  The first basic result is that 
$A_{\G,T}$ is standard Koszul; if follows from this that the Grothendieck groups $K_0\cong G_0$ equipped with the Euler/Hom pairing are isomorphic to the Euclidean lattice $\Z^{E(G)}$.  

Moreover, the subcategory of $A_{\G,T}\text{-mod}$ generated by a distinguished subset of projective modules descends in the Grothendieck group to the lattice of integer flows $\calF(\G)$; similarly, the lattice of integer cuts $\calC(G)$ is realized in the Grothendieck group as the span of a complementary collection of simple modules.  

When $\G$ is planar, the pair $(\G,T)$ has a planar dual $(\G^!,T^!)$, and it is immediate from the construction that the algebra assigned to $(\G^!,T^!)$ is the Koszul dual of that assigned to $(\G,T)$.  This is reminiscent of other combinatorial shadows of Koszul duality, such as the Gale duality/Koszul duality phenomena appearing in work of the second author with Braden--Proudfoot--Webster \cite{BLPW} on symplectic duality for hypertoric varieties.

From this point of view, the structure of the Euclidean lattice is categorified by that of a finite dimensional standard Koszul algebra.  It would be interesting to see whether or not other unimodular lattices, such as the Leech lattice and Niemeier lattices, appear as ``homologically finite" structures in classical or $p$-dg homological algebra.  Although we don't address it in the current document, this question is a large part of our motivation for writing this paper.

\subsection{$q$-lattices and graph theory}
One interesting piece of structure enjoyed by the bipartite algebras $A_{\G,T}$ is a non-negative (Koszul) grading.  We may therefore consider categories of graded 
$A_{\G,T}$ modules, so that the Grothendieck groups of these categories are free $\Z[q,q^{-1}]$ modules endowed with $\Z[q,q^{-1}]$-valued bilinear forms; these free modules are interesting in their own right.  

We define a {\em q-lattice} to be a finitely generated free $\Z[q,q^{-1}]$-module $L$ with a non-degenerate sesqui-linear (that is, linear in the second argument and $q$-anti-linear in the first argument) form
 $$
 	\langle \bullet,\bullet \rangle : L \times L \longrightarrow \Z[q,q^{-1}],
$$
together with a $q$-anti-linear involution $d : L \longrightarrow L$ such that for all $x,y\in L$,
$$
	\langle x,y \rangle = \langle d(y), d(x) \rangle,
$$
and such that $d$ becomes the identity map after setting $q=1$.  The Grothendieck groups of categories of graded $A_{\G,T}$ modules are then $q$-lattices.  We consider the analogs of basic lattice-theoretic notions such as duality, determinants, unimodularity, and gluing for $q$-lattices (Section~\ref{sec:qLattice}).  We define the $q$-cut and $q$-flow lattices of graphs, and, in Section~\ref{sec:CutFlow}, prove $q$-analogues of two famous classical results in graph theory.

We note here that are many generalizations of integer lattices in the literature of lattices to rings other than $\Z$, however, most focus on the theory of lattices over principal ideal domains.  Thus, as far as we are aware, there doesn't currently exist a well-developed theory of $q$-lattices, to which our $q$-cut and $q$-flow lattices belong. However, $\Z[q,q^{-1}]$ shares some features with principal ideal domains: for example, projective modules over $\Z[q,q^{-1}]$ are always free \cite{Swa}.   

It is easy to see from their construction that the isomorphism class of the $q$-cut and $q$-flow lattices of a graph-and-spanning-tree pair $(\G,T)$ 
depends non-trivially on the spanning tree $T$, in contrast to the theory of classical cut and flow lattices.  We now briefly explain the graph-theoretic information that is captured by including the parameter $q$.  

A theorem of Su--Wagner and Caporaso--Viviani \cite{SW, CV} states that for two (two-edge-connected) graphs $\G_1$ and $\G_2$, there exists a lattice isomorphism between the lattices of integer flows $\calF(\G_1) \cong \calF(\G_2)$, if and only if there exists a bijection $F: E(\G_1) \to E(\G_2)$, which preserves cycles. 
The $q$-analogue of this theorem, Theorem~\ref{thm:q-2-iso} of this paper, illustrates that the $q$-cut and $q$-flow lattices we define essentially determine the spanning tree $T$:
\begin{theorem*}
For two graphs $\G_1$ and $\G_2$ with respective spanning trees $T_1$ and $T_2$, there exists a q-lattice isomorphism
 $\calF_{q}(\G_{1},T_{1}) \cong \calF_{q}(\G_{2},T_{2})$, if and only if
there exists a cycle-preserving bijection of edges $F: E(\G_{1}) \to E(\G_{2})$ such that $F(T_{1})=T_{2}$.
\end{theorem*}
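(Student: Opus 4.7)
The plan is to establish the two directions separately. The ``if'' direction follows from functoriality of the construction of $A_{\G,T}$, while the ``only if'' direction combines the classical Su-Wagner--Caporaso-Viviani theorem, applied after specialization at $q=1$, with an analysis of the $q$-grading that recovers the spanning tree.

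For the \emph{if} direction, a cycle-preserving bijection $F\colon E(\G_1) \to E(\G_2)$ with $F(T_1) = T_2$ preserves all of the combinatorial data from which $A_{\G,T}$ is constructed: the tree/non-tree partition of edges and the cycle matroid of $\G$. It therefore induces an isomorphism of graded bipartite algebras $A_{\G_1,T_1} \cong A_{\G_2,T_2}$, which passes to an isomorphism of graded module categories and hence to an isomorphism of Grothendieck groups as $q$-lattices, restricting to the distinguished subcategories whose Grothendieck groups are $\calF_q(\G_i,T_i)$.

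For the \emph{only if} direction, I would first specialize the given $q$-lattice isomorphism at $q=1$: this yields an isomorphism of the classical flow lattices $\calF(\G_1) \cong \calF(\G_2)$, and the theorem of Su-Wagner and Caporaso-Viviani then produces a cycle-preserving edge bijection $F\colon E(\G_1) \to E(\G_2)$. What remains is to arrange $F(T_1)=T_2$, and this is where the $q$-information must be used. The key observation is that $\calF_q(\G,T)$ ``knows'' the spanning tree through its Koszul grading: the fundamental cycles indexed by non-tree edges form a distinguished collection of elements in the $q$-lattice, and tree vs.\ non-tree edges contribute to the sesquilinear pairing with different $q$-weights. A $q$-lattice isomorphism, being compatible with both the pairing and the involution $d$, should therefore be forced to take this distinguished collection in $\calF_q(\G_1,T_1)$ to the corresponding one in $\calF_q(\G_2,T_2)$, inducing a bijection $E(\G_1)\setminus T_1 \to E(\G_2)\setminus T_2$; combined with the cycle structure already identified from $q=1$, this promotes $F$ to a cycle-preserving bijection satisfying $F(T_1)=T_2$.

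The hard part, and the step I expect to be the main obstacle, is the intrinsic detection of the non-tree edges purely from the $q$-lattice data. A priori the $q$-lattice is just a free $\Z[q,q^{-1}]$-module with a sesquilinear form and involution, and the basis of fundamental cycles is not intrinsic. One must characterize this basis (or at least the set of non-tree edges it indexes) by invariants of the form and the involution alone -- for instance by using the explicit $q$-Gram matrix and the $q$-matrix-tree theorem established earlier in the paper to isolate the $q$-degrees at which distinguished elements live. Care is also needed to ensure the bijection of non-tree edges obtained this way is actually compatible with a cycle-preserving bijection, rather than merely coinciding with $F$ at $q=1$ on the nose; a clean way to avoid this difficulty may be to construct $F$ directly from the $q$-isomorphism, using the $q=1$ specialization only as a consistency check.
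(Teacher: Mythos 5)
Your overall architecture matches the paper's: the forward direction by transporting the construction along $F$, and the converse by specializing at $q=1$, invoking Su--Wagner/Caporaso--Viviani, and then using the $q$-structure to recover the spanning tree. The forward direction is essentially the paper's argument (the paper works at the level of lattices, lifting $F$ to a signed map $\tilde F$ on $\Z^{E(\G_1)}$ rather than at the level of algebras, but this is the same idea). The converse, however, contains two gaps, one of which you flag but do not close, and one of which you do not identify.

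The step you correctly flag as the main obstacle --- detecting the fundamental-cycle basis intrinsically from the $q$-lattice --- is resolved in the paper by the Gram-rigidity lemma (Lemma~\ref{lem:rigidity}): in any $q$-lattice whose Gram matrix has the form $I + q^k(\text{integer matrix})$ in some basis, an element of norm $1+cq^k$ is necessarily $\pm$ a basis vector. Since $\calF_q(\G,T)$ has exactly this form in the basis $\{[P_i]\}_{i\notin T}$ (Remark~\ref{rmk:qGram}), any $q$-lattice isomorphism must send fundamental cycles to signed fundamental cycles; your appeal to ``$q$-degrees at which distinguished elements live'' and to the $q$-matrix-tree theorem does not substitute for this argument. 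The gap you do not identify is that even once the cycle-preserving bijection $F'$ from the strong form of Su--Wagner is known to carry $T_1$-fundamental cycles to $T_2$-fundamental cycles \emph{as edge sets}, it does not follow that $F'(T_1)=T_2$: a tree edge $e$ lying in a \emph{unique} fundamental cycle $C_i$ may be sent to a non-tree edge of $F'(C_i)$. The paper repairs this by observing that two edges occurring only in the same single fundamental cycle occur together in \emph{every} cycle, so transposing them is a cycle-preserving automorphism of $E(\G_1)$; composing $F'$ with such transpositions produces the required $F$ with $F(T_1)=T_2$. Without this final adjustment the converse direction is incomplete.
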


The classical Matrix-Tree Theorem (see for example \cite[Chapter 6]{Bi}) of graph theory, also known as the Kirchoff Theorem, states that the determinant of $\calC(\G)$ -- that is, the determinant of the pairing matrix of the bilinear form on $\calC(\G)$ -- counts the number of spanning trees of $\G$. In Theorem~\ref{thm:qMatrixTree} we prove the following $q$-analogue, which further illustrates how the $q$-cut and $q$-flow lattices retain the spanning tree information:

\begin{theorem*}
Let $\G$ be a graph with chosen spanning tree $T$.  Then the determinant of $\calC_q(\G,T)$ is a polynomial $\sum_{i=0}^r c_i q^{2i}$ with $c_0=1$, and $r=|E(T)|=|V(G)|-1$.  The coefficient $c_i$ is equal to the number of spanning trees of $\G$ which differ from $T$ in exactly $i$ edges.  
\end{theorem*}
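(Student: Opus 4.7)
The plan is to adapt the classical Cauchy--Binet proof of the matrix-tree theorem to the $q$-graded setting. The first step is to identify a natural basis of $\calC_q(\G,T)$ and its Gram matrix. Classically, $\calC(\G)$ has a basis of fundamental cuts $\{C_e : e \in E(T)\}$, where $C_e$ is the cut determined by the connected components of $T\setminus e$. In the $q$-setting these fundamental cuts lift to elements of $\Z[q,q^{-1}]^{E(\G)}$ in which the tree edge $e$ appears with coefficient $\pm 1$ and each non-tree edge $f$ whose fundamental cycle traverses $e$ appears with coefficient $\pm q$. Writing $B$ for the classical $r\times |E(\G)|$ fundamental-cut incidence matrix and $N$ for the diagonal matrix with $N_{f,f}=1$ when $f\in T$ and $N_{f,f}=q$ when $f\notin T$, the Gram matrix of these fundamental cuts equals $(BN)(BN)^{T}$; the sesqui-linearity built into the definition of $q$-lattice ensures the $q$-factors combine multiplicatively rather than canceling.

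Given this identification, I would apply the Cauchy--Binet formula to $BN$:
$$
\det \calC_q(\G,T) \;=\; \det\bigl((BN)(BN)^{T}\bigr) \;=\; \sum_{\substack{S\subset E(\G)\\ |S|=r}} \det\bigl((BN)|_S\bigr)^{2}.
$$
For each $r$-element subset $S$ the $q$-weight factors out as $\det((BN)|_S)=q^{|S\setminus T|}\det(B|_S)$. The classical matrix-tree argument says $\det(B|_S)=\pm 1$ when $S$ is a spanning tree of $\G$ and vanishes otherwise, so squaring washes out the signs. Since $|S|=|T|=r$, one has $|S\setminus T|=|T\setminus S|=:i$, and grouping spanning trees $S$ by their common value of $i$ rewrites the sum as $\sum_{i=0}^{r} c_i q^{2i}$, where $c_i$ counts spanning trees differing from $T$ in exactly $i$ edges. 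The normalization $c_0=1$ is the unique contribution $S=T$, and $r=|E(T)|=|V(\G)|-1$ by definition of a spanning tree.

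The main obstacle is the first step: pinning down the exact fundamental-cut basis coming from the Koszul structure on $A_{\G,T}$ and matching the Koszul-degree shifts on non-tree edges with the factor-of-$q$ weighting described above. Once the Gram matrix is correctly presented as $(BN)(BN)^{T}$, Cauchy--Binet and the classical matrix-tree minor calculation do all the work. A secondary issue worth isolating is the well-definedness of the ``determinant'' of a $q$-lattice: a basis change scales it by $u\bar{u}$ for some unit $u\in\Z[q,q^{-1}]$, i.e.\ by a factor of the form $q^{2k}$, so the condition $c_0=1$ is what singles out the preferred normalization and makes the statement sharp.
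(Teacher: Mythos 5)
Your proposal is correct and is essentially the paper's proof: both arguments apply Cauchy--Binet to a $q$-weighted matrix presenting the cut space, with tree columns weighted by $1$ and non-tree columns by $q$, and both reduce the minor computation to the classical total-unimodularity fact that an $r\times r$ minor is $\pm1$ exactly when the chosen columns form a spanning tree. The only organizational difference is that the paper works with the reduced vertex--edge $q$-incidence matrix $D_0$ and then supplies an explicit unimodular change of basis identifying $D_0D_0^t$ with the Gram matrix in the fundamental-cut basis, whereas you start from the fundamental-cut matrix $BN$ directly; the same unimodular change of basis is then hidden in your appeal to the fact that maximal minors of the fundamental-cut matrix detect spanning trees. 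One caveat on your first step: the remark that ``sesquilinearity ensures the $q$-factors combine multiplicatively'' is backwards --- under a sesquilinear form with $E(\G)$ orthonormal the $q$'s would cancel, since $\bar{q}q=1$. The correct justification for the plain-transpose product $(BN)(BN)^T$ is Proposition~\ref{prop:CutCategorification} together with Remark~\ref{rmk:qGram}: the graded Euler form on the classes $[L_i]$, $i\in T$, equals the classical cut pairing with every shared non-tree edge contributing $q^2$ rather than $1$, and that is exactly what the unconjugated bilinear product of your $q$-weighted fundamental cuts computes.
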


Finally, we note that the constructions and results of this paper generalise in a straightforward way from graphs to regular matroids. Regular matroids possess a notion of duality (also known as Gale duality), which generalizes planar graph duality, and exchanges the lattices of integer cuts and integer flows. It is this more general matroid duality that really corresponds to Koszul duality at the level of bipartite algebras in our main construction. While we predominantly use the language of graphs for simplicity in the body of the paper, we will take short detours into the world of regular matroids to indicate how our constructions apply in that setting. A more matroid-centred approach can be found in the University of Sydney undergraduate honours thesis of Leo Jiang \cite{LJ}.

\subsection{Plan}The paper is structured as follows. In Section~\ref{sec:Algebra} we introduce bipartite algebras and establish basic properties of their module categories. In Section~\ref{sec:qLattice} we define $q$-lattices, and basic notions such as Gram matrices and lattice gluing.  Our guiding examples are Grothendieck groups of graded module categories over bipartite algebras. In Section~\ref{sec:CutFlow}, the most substantial section of this paper, we construct bipartite algebras from graphs with a choice of a spanning tree. We use their module categories to categorify the classical cut and flow lattices, study the $q$-cut and $q$-flow lattices obtained from this construction, and prove the graph theoretic theorems mentioned above.

\subsection{Acknowledgements}
We are grateful to Leo Jiang for his input in Section~\ref{sec:qLattice}, in particular the proof of statement~(1) of Theorem~\ref{thm:qGlue}, and for proofreading a draft of this paper. We thank Ben Webster for helpful conversations, and Tatiana Nagnibeda for pointing us to the valuable reference \cite{BHN} at an early stage of this project. 

Z.\ D. was funded by an Australian Research Council Discovery Early Career Research Award DE170101128. A.\ L.\ acknowledges support from the Australian Research Council Discovery Project 
DP180103150.

\section{Bipartite algebras}\label{sec:Algebra}

We start by introducing {\em bipartite algebras}, associated to bipartite graphs via a path algebra construction.
In Section~\ref{sec:CutFlow} we will discuss how bipartite algebras arise naturally from (not necessarily bipartite) graphs along with a 
choice of a spanning tree, and in this way they give rise to
a homological algebraic realization of the lattices of integer cuts and flows associated to graphs.

\begin{definition}\label{def:A(B)}
Let $B$ be a bipartite graph, with vertices $V(B) = V_0(B) \cup V_1(B)$.
Let $Q(B)$ denote the double quiver of $B$, that is, the directed graph in which each edge of $B$ is replaced by two arrows, one in each orientation.
We associate two $\C$-algebras to $B$:
\begin{itemize}
\item $A(B) := \text{Path}(Q(B))/\{\text{length two paths which start and end in } V_0(B)\}$,
\item $A^{!}(B) := \text{Path}(Q(B))/\{\text{length two paths which start and end in } V_1(B)\}$.
\end{itemize}
Here $\text{Path}(Q(B))$  denotes the {\em path algebra} of $Q(B)$, whose underlying vector space is spanned by the oriented paths in the quiver $Q(B)$. Multiplication is given by concatenation of paths whenever the last vertex of the first path agrees with the first vertex of the second, and defined to be zero otherwise. 

We call $A(B)$ the {\em bipartite algebra} associated to $B$. Both $A(B)$ and $A^{!}(B)$ are $\Z$-graded by path length.  We note also that it is immediate from the definitions that any path consisting of three or more edges is zero in both $A(B)$ and $A^{!}(B)$.  In what follows we denote by $e_i$ the constant path at vertex $i$: the $\{e_i\}$ are pairwise orthogonal idempotents, with 
$1 = \sum_{i\in V(B)} e_i$.  
\end{definition}

\subsection{Finitely generated graded $A(B)$-modules}\label{subsec:Modules}
Let $A(B)\text{-mod}$ denote the category of finitely generated {\em graded} left $A(B)$-modules.
Classifying simple and indecomposable projective $A(B)$-modules is straightforward from the quiver description of $A(B)$.  We state the basic results below, leaving proofs to the interested reader or to \cite{GG, LJ}.

\begin{proposition} Isomorphism classes of ungraded simple $A(B)$-modules are in bijection with the bipartite vertex set $V(B)$.
Denote the simple module corresponding to $e_i\in V(B)$ by $L_i$: as a vector space $L_i$ is one dimensional, $e_i$ acts as the identity, and all other $e_j$ as well as the positively graded subalgebra of $A(B)$, act by zero.
 \end{proposition}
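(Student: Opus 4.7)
The plan is to run the standard classification of simples for a finite-dimensional positively graded algebra whose degree-zero part is semisimple: identify the Jacobson radical as the positive-degree ideal, pass to the semisimple quotient, and read off the simple modules from the resulting product of fields.

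First I would observe that $A(B)$ is finite-dimensional. By the definition of $A(B)$ together with the remark that any path consisting of three or more edges vanishes, the algebra is spanned by paths of length $0$, $1$, or $2$ in the finite quiver $Q(B)$; this is a finite-dimensional set. Consequently $A(B)$ is an Artinian $\C$-algebra and Wedderburn/Jacobson theory applies.

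Next, let $J \subseteq A(B)$ be the two-sided ideal $\bigoplus_{n>0} A(B)_n$ spanned by paths of positive length, equivalently the ideal generated by the arrows of $Q(B)$. Since every path of length $\geq 3$ is zero, $J^{3}=0$, so $J$ is nilpotent. The quotient $A(B)/J$ is $A(B)_0$, which is $\C$-spanned by the pairwise orthogonal idempotents $\{e_i\}_{i\in V(B)}$ with $1=\sum_i e_i$; hence there is a $\C$-algebra isomorphism $A(B)/J \cong \prod_{i\in V(B)} \C$. This quotient is semisimple, and since $J$ is a nilpotent ideal with semisimple quotient, $J$ coincides with the Jacobson radical of $A(B)$.

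The standard correspondence for Artinian algebras now gives a bijection between isomorphism classes of simple $A(B)$-modules and simple $A(B)/J$-modules, via pullback along $A(B) \twoheadrightarrow A(B)/J$. The simple modules of $\prod_{i\in V(B)} \C$ are exactly the one-dimensional modules $L_i$, indexed by $i\in V(B)$, on which $e_i$ acts as the identity and $e_j$ acts as zero for $j\neq i$; pulling back to $A(B)$, the ideal $J=A(B)_{>0}$ necessarily acts by zero. This reproduces the description in the statement. The only non-immediate step in this plan is the identification of $J$ with the Jacobson radical, and this obstacle dissolves the moment one notes $J^{3}=0$, so no genuine difficulty arises.
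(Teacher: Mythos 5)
Your argument is correct and complete: $A(B)$ is finite-dimensional since paths of length $\geq 3$ vanish, the positive-degree ideal $J$ satisfies $J^{3}=0$ and has semisimple quotient $\prod_{i\in V(B)}\C$, so $J$ is the Jacobson radical and the simples are exactly the one-dimensional pullbacks $L_i$. The paper does not actually write out a proof of this proposition (it defers to the references on graded Artin algebras), but the standard radical-quotient argument you give is precisely the one being invoked there.
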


As graded modules, the $L_i$ are declared to be contained in degree zero. All graded simple modules are shifts of the $L_i$.
 Since $A(B)$ is Artinian, indecomposable projective modules are in bijection with simple modules. Hence, we obtain the following:

\begin{corollary} The indecomposable projective $A(B)$-modules are given by the projective covers
of the simple modules. The projective cover $L_i$ is $P_i:=A(B) \cdot e_i$, which has a  as a vector space by paths which end at $i$.
\end{corollary}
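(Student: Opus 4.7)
The plan is to assemble three standard facts about quiver algebras with relations: (i) the decomposition $1 = \sum_i e_i$ produces a decomposition of the regular module into indecomposable projectives, (ii) each $e_i$ is a primitive idempotent, so each summand is indecomposable, and (iii) the projective top $P_i/\mathrm{rad}(P_i)$ recovers the simple module $L_i$ identified in the preceding proposition.

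First, I would decompose $A(B) = \bigoplus_{i \in V(B)} A(B) e_i$ as a left $A(B)$-module, using that the $\{e_i\}$ are pairwise orthogonal with $\sum_i e_i = 1$; each summand $P_i := A(B) e_i$ is then projective by construction. Next, I would verify primitivity of $e_i$ by analysing the corner ring $e_i A(B) e_i$. Since every path of length $\geq 3$ vanishes in $A(B)$ (as noted in Definition \ref{def:A(B)}), the algebra $A(B)$ is finite-dimensional and non-negatively graded, with degree-zero component $\bigoplus_j \C e_j$ semisimple. The positive-degree part $A(B)_{>0}$ is therefore a nilpotent two-sided ideal and coincides with the Jacobson radical of $A(B)$. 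Consequently $e_i A(B) e_i = \C e_i \oplus e_i A(B)_{>0} e_i$ is the sum of a field and a nilpotent ideal, hence local, so $e_i$ is primitive and $P_i$ is indecomposable.

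Then I would compute the head: $\mathrm{rad}(P_i) = A(B)_{>0} \cdot e_i$, so $P_i / \mathrm{rad}(P_i)$ is one-dimensional, concentrated in degree zero, with $e_i$ acting as the identity and all other $e_j$ acting by zero. This matches the defining description of $L_i$ from the preceding proposition, so $P_i$ is the projective cover of $L_i$. Since $A(B)$ is Artinian, the Krull--Schmidt theorem then guarantees that every indecomposable projective is isomorphic to some $P_i$, completing the bijection with simple modules.

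Finally, the basis claim is essentially bookkeeping: the path algebra $\mathrm{Path}(Q(B))$ has a basis of oriented paths, and right multiplication by $e_i$ selects exactly the paths terminating at vertex $i$; passing to the quotient $A(B)$, one obtains a basis of $P_i$ consisting of those such paths which survive the length-two relations. There is no genuine obstacle in this argument; the only care point is tracking which length-two paths are killed by the relations, which depends on whether the endpoints lie in $V_0(B)$ or $V_1(B)$ --- a combinatorial observation rather than mathematical substance.
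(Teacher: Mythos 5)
Your proof is correct and is precisely the standard argument the paper invokes but omits (the paper states the corollary without proof, appealing to $A(B)$ being Artinian and deferring details to the references): decompose the regular module via the orthogonal idempotents, check primitivity through the local corner rings $e_iA(B)e_i$ (using nilpotence of the positive-degree part), identify the head with $L_i$, and invoke Krull--Schmidt. The identification of a basis of $P_i=A(B)e_i$ with paths ending at $i$ also matches the paper's concatenation convention for the path algebra.
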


The modules $P_i$ are naturally graded by path length, and all graded indecomposable projective $A(B)$-modules are shifts of the $P_i$.
Morphisms between indecomposable projective modules are given as follows:

\begin{proposition}\label{prop:HomP}
The set of $A$-homomorphisms $Hom(P_i,P_j)$ has a basis consisting of paths from $i$ to $j$.
 That is, $Hom(P_i,P_j)\cong e_iA(B)e_j$, as graded vector spaces.  The map which sends a path $\alpha$ to the reverse path $\overline{\alpha}$ extends linearly to a vector space isomorphism $\Hom(P_i,P_j)\cong \Hom(P_j,P_i)$. 
 \end{proposition}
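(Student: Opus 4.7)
The plan is to handle the two assertions separately, using a standard idempotent argument for the first and the symmetry of the defining relations for the second.

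For the identification $\Hom(P_i,P_j)\cong e_iA(B)e_j$, I would use the following routine argument. Any left $A(B)$-module map $\phi\colon A(B)e_i\to A(B)e_j$ is determined by $\phi(e_i)$, and since $e_i^2=e_i$ we have $\phi(e_i)=e_i\phi(e_i)$, so $\phi(e_i)\in e_iA(B)e_j$. Conversely, any $x\in e_iA(B)e_j$ defines a left-module map $\phi_x$ via $\phi_x(ae_i):=ax$, and this is well-defined because $x=xe_j\in A(B)e_j$. The two assignments $\phi\mapsto\phi(e_i)$ and $x\mapsto\phi_x$ are mutually inverse and degree-preserving with respect to the path-length grading, so $\Hom(P_i,P_j)\cong e_iA(B)e_j$ as graded vector spaces. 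The basis statement then follows from the quiver description: a basis of $e_iA(B)e_j$ is given by classes of paths in $Q(B)$ from $i$ to $j$ that survive the length-two relation.

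For the reverse-path isomorphism, I would first define, at the level of the free path algebra $\text{Path}(Q(B))$, the length-preserving linear map $R$ that sends each constant path $e_i$ to itself, each arrow to its opposite arrow (which exists precisely because $Q(B)$ is the double quiver of $B$), and a general path $\alpha_1\alpha_2\cdots\alpha_n$ to $\bar\alpha_n\cdots\bar\alpha_2\bar\alpha_1$. This $R$ is an anti-involution of $\text{Path}(Q(B))$. To pass $R$ to the quotient $A(B)$, I only need to check that the two-sided ideal generated by the length-two paths with both endpoints in $V_0(B)$ is preserved by $R$. This is immediate: the reverse of such a path has exactly the same two endpoints (read in the opposite order), so it lies in the same class of relations. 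Therefore $R$ descends to a length-preserving linear endomorphism of $A(B)$ that restricts to a vector space isomorphism $e_iA(B)e_j\xrightarrow{\sim}e_jA(B)e_i$, with inverse $R$ itself. Composing with the $\Hom$-identifications from the first paragraph yields the claimed $\Hom(P_i,P_j)\cong\Hom(P_j,P_i)$.

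There is no real obstacle in this proof; the only subtle point is noticing that the relations defining $A(B)$ are manifestly invariant under path reversal, since the criterion for vanishing (length two, both endpoints in $V_0(B)$) depends only on the unordered pair of endpoints and the length. This reversal symmetry should be contrasted with the relations defining the Koszul dual $A^!(B)$, where the roles of $V_0(B)$ and $V_1(B)$ are swapped but reversal symmetry is again preserved; this will be relevant later when comparing modules over the two algebras.
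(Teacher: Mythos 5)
Your proof is correct and is exactly the standard argument the paper has in mind (the paper explicitly leaves the proof of this proposition to the reader, citing \cite{GG, LJ}): the idempotent computation $\Hom_{A}(Ae_i,Ae_j)\cong e_iA e_j$ for the first claim, and the observation that the monomial relations (length-two paths with both endpoints in $V_0(B)$) are permuted by path reversal, so the anti-involution of the path algebra descends to $A(B)$ and carries $e_iA(B)e_j$ onto $e_jA(B)e_i$. The only cosmetic point is that well-definedness of $\phi_x$ is cleanest if you note $x=e_ixe_j$ and define $\phi_x(m)=mx$ for $m\in A(B)e_i$, but this changes nothing of substance.
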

 
Next, we show that $A(B)$ is a standard Koszul algebra. In general, a graded algebra $A$ is {\em Koszul} if every graded simple left module admits a finite linear 
projective resolution. Let $\{L_i: i\in I\}$ denote the simple left $A$-modules, and let $\{P_i: i\in I\}$ be their respective 
projective covers. We recall the definition of a standard Koszul algebra:

\begin{definition}\label{def:StdKoszul}
The Koszul algebra
$A$ is called {\em standard Koszul} if there exists a set of {\em standard modules} $\{V_i: i\in I\}$ with surjections
$P_i \xrightarrow{p_i} V_i \xrightarrow{\pi_i} L_i$, such that for some partial order $<$ on $I$,
\begin{enumerate}
 \item For all $ i\in I$ the module ker$(\pi_i)$ has a filtration where each sub-quotient is isomorphic to $L_j$ for some $j<i$, and
 \item For all $ i \in I$ the module ker$(p_i)$ has a filtration where each sub-quotient is isomorphic to $V_k$ for some $k>i$. 
\end{enumerate}
\end{definition}

In this case, the category $A(B)$-mod is called a {\em highest weight category}. For a more detailed introduction to 
 highest weight categories and Koszul algebras, see Section~5 of \cite{BLPW}. Algebras whose module categories are
 highest weight are also called {\em quasi-hereditary}.

\begin{proposition}\label{prop:ProjRes}
 Each simple graded left $A(B)$-module $L_i$ has a finite linear projective resolution. 
\end{proposition}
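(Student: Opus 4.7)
The plan is to construct a linear projective resolution of each simple module $L_i$ explicitly. The key simplification, noted after Definition~\ref{def:A(B)}, is that in $A(B)$ every path of length at least three vanishes, so the indecomposable projectives have short filtrations: for $i \in V_0(B)$ the module $P_i$ lives only in degrees $0$ and $1$, while for $j \in V_1(B)$ the module $P_j$ lives in degrees $0,1,2$ (the degree-$2$ part being spanned by the surviving length-$2$ paths $V_1 \to V_0 \to V_1$ ending at $j$). This dichotomy predicts resolutions of two different lengths.

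I will first treat $j \in V_1(B)$. The claim is that
\[
0 \longrightarrow \bigoplus_{\substack{k \in V_0(B) \\ k \sim j}} P_k\langle 1\rangle \longrightarrow P_j \longrightarrow L_j \longrightarrow 0
\]
is a linear projective resolution, where $\langle 1 \rangle$ denotes the grading shift placing the top generator in degree $1$, and the first nontrivial map sends $e_k$ (now in degree $1$) to the arrow $k \to j \in P_j$, extended by path concatenation. A direct basis count, using the classification of nonzero paths above, identifies this map as an isomorphism onto $\ker(P_j \to L_j)$: in degree $1$ both sides are spanned by the arrows $k \to j$, and in degree $2$ both sides are spanned by the length-$2$ paths $j' \to k \to j$.

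The case $i \in V_0(B)$ requires one additional step. I will construct
\[
0 \longrightarrow \bigoplus_{\substack{j \in V_1(B),\, j \sim i \\ k \in V_0(B),\, k \sim j}} P_k\langle 2\rangle \longrightarrow \bigoplus_{\substack{j \in V_1(B) \\ j \sim i}} P_j\langle 1\rangle \longrightarrow P_i \longrightarrow L_i \longrightarrow 0,
\]
again with differentials given by path concatenation. The first differential covers the degree-$1$ part of $P_i$ bijectively, but has a nontrivial kernel in degrees $2$ and $3$: the higher-degree elements of each summand $P_j\langle 1\rangle$ would concatenate with $j \to i$ to form paths of length $2$ or $3$ ending at $V_0(B)$, which vanish in $A(B)$. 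The term $\bigoplus P_k\langle 2\rangle$, which itself lives only in degrees $2$ and $3$ since $P_k$ has length one for $k \in V_0(B)$, then precisely covers this kernel.

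The main obstacle is the basis bookkeeping for the second differential: one must verify simultaneously that its image exhausts $\ker(P_i^1 \to P_i)$ and that it is itself injective. Both reduce to the combinatorial observation that a nonzero length-$2$ path $V_1 \to V_0 \to V_1$ in $A(B)$ is uniquely determined by its endpoints and midpoint. Once these identifications are in place, linearity is immediate from the placement of generators, and the resolution terminates because $P_k\langle 2\rangle$ for $k \in V_0(B)$ has no components above degree $3$.
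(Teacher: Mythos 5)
Your proposal is correct and follows essentially the same route as the paper: it writes down the identical explicit linear resolutions (length one for $i\in V_1(B)$, length two for $i\in V_0(B)$) with differentials given by right concatenation with arrows, and verifies exactness by matching the surviving paths degree by degree. The only difference is that you spell out the exactness check in slightly more detail than the paper, which simply exhibits the resolutions.
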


\begin{proof}
 Let $N_i$ denote the neighbourhood of the vertex $i$ in $B$, that is, the set of vertices adjacent to $i$. Let
 $x_{ij}$ denote the length one path $i \rightarrow j$, where $i$ and $j$ are adjacent vertices.
 We'll denote the degree shift in the path length grading by $\{\cdot\}$.
 
 First assume $i\in V_1(B)$. Then the linear projective resolution for $L_i$ is
 $$\bigoplus_{j\in N_i} P_j \{1\}  \xrightarrow{\sum \cdot x_{ji}} P_i \to L_i.$$
 
 Similarly, if $i \in V_0(B)$, then $L_i$ has linear projective resolution
 $$\bigoplus_{j\in N_i} \bigoplus_{k \in N_j} P_k \{2\} \xrightarrow{\sum \cdot x_{kj}} 
 \bigoplus_{j\in N_i} P_j \{1\}  \xrightarrow{ \sum \cdot x_{ji}} P_i \to L_i.$$
 Note that in the sum $\bigoplus_{j\in N_i} \bigoplus_{k \in N_j} P_k \{2\}$, each indecomposable projective module
 may appear multiple times.
 \end{proof}

\begin{proposition} \label{prop:KoszulDual} With respect to the partial order given by $V(1)<V(0)$,  
the standard modules over $A(B)$ are $V_i=P_i$ when $i\in V(0)$, and $V_i=L_i$ when $i \in V(1)$.
\end{proposition}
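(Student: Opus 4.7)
The plan is to verify Definition~\ref{def:StdKoszul} case by case, leveraging the fact that the quadratic relations defining $A(B)$ force the radicals of the projective modules to be very small. The key preliminary observation is that for $i \in V_0(B)$, any length-$2$ path ending at $i$ has the shape $V_0 \to V_1 \to V_0$ and is therefore zero in $A(B)$; consequently $P_i$ is concentrated in path-length degrees $0$ and $1$ when $i \in V_0$, and $\operatorname{rad}(P_i)$ has basis $\{x_{\ell i}\}_{\ell \in N_i}$.

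For $i \in V_1$, I would set $V_i = L_i$, take $p_i: P_i \twoheadrightarrow L_i$ to be the natural surjection, and $\pi_i = \id$. Then $\ker(\pi_i) = 0$, so condition~(1) is vacuous. For condition~(2), the linear resolution of Proposition~\ref{prop:ProjRes} gives an exact sequence $\bigoplus_{j \in N_i} P_j\{1\} \to P_i \to L_i \to 0$, and a dimension count --- using the preliminary observation to evaluate $\dim P_j$ for $j \in V_0$ --- shows the first map is injective. Hence $\ker(p_i) \cong \bigoplus_{j \in N_i} V_j\{1\}$ as a direct sum of $V_j$'s with $j \in V_0$, and since $V_0 > V_1$ in the chosen order, this direct sum decomposition provides the required filtration.

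For $i \in V_0$, I would set $V_i = P_i$, so that $p_i = \id$ and $\pi_i: P_i \twoheadrightarrow L_i$ is the natural projection. Now $\ker(p_i) = 0$, so condition~(2) is vacuous. For condition~(1), the preliminary observation identifies $\ker(\pi_i) = \operatorname{rad}(P_i)$ with the span of $\{x_{\ell i}\}_{\ell \in N_i}$. Each $x_{\ell i}$ generates a one-dimensional submodule: $e_\ell$ acts as the identity, while any arrow into $\ell \in V_1$ would come from some $V_0$ vertex and create a disallowed $V_0 \to V_1 \to V_0$ length-$2$ path, hence acts as zero. Thus the submodule generated by $x_{\ell i}$ is isomorphic to $L_\ell\{1\}$, and $\operatorname{rad}(P_i) \cong \bigoplus_{\ell \in N_i} L_\ell\{1\}$, a direct sum filtered by $L_k$'s with $k \in V_1 < i$.

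I do not anticipate any genuine obstacle; the step requiring most care is simply keeping track of the asymmetric roles played by $V_0$ and $V_1$, which is built into the defining relations of $A(B)$ and is exactly what forces the choice $V_i = P_i$ on the $V_0$-side and $V_i = L_i$ on the $V_1$-side.
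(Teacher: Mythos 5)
Your proposal is correct and follows essentially the same route as the paper: both verify the two conditions of Definition~\ref{def:StdKoszul} directly, using the fact that the relations kill all $V_0\to V_1\to V_0$ paths to identify $\operatorname{rad}(P_i)\cong\bigoplus_{\ell\in N_i}L_\ell\{1\}$ for $i\in V_0$ and $\ker(p_i)\cong\bigoplus_{j\in N_i}P_j\{1\}$ for $i\in V_1$ (the paper writes the resulting filtration as partial sums of these direct sums, and leaves the $V_1$ case as ``similar''). Your dimension count supplying injectivity of $\bigoplus_j P_j\{1\}\to P_i$ is a reasonable way to fill in that omitted case.
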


\begin{proof}
We need to check that the conditions (1) and (2) of Definition~\ref{def:StdKoszul} are satisfied.
 Assume that $i \in V(0)$. In this case $V_i=P_i$, so condition $(2)$ is vacuous. As for condition (1),
 a basis for ker$(\pi_i)$ consists of the length one paths $x_{ji}$ for $j \in N(i)$. Let $N(i)=\{{j_1},...,{j_r}\}$.
 Then the filtration $A(B)x_{j_1i} \subseteq A(B)x_{j_1i}\oplus A(B)x_{j_2i}\subseteq ... \subseteq A(B)x_{j_1i}\oplus A(B)x_{j_2i}\oplus...\oplus A(B)x_{j_ri}$
 satisfies condition (1).
 The proof is similar for $i \in V(1)$.
\end{proof}

 A Koszul algebra $A$ is always quadratic, and in this case the quadratic dual algebra is also called the Koszul dual, and denoted by $A^!$.

\begin{proposition}\label{prop:quadratic}
 The algebra $A^{!}(B)$ of Definition \ref{def:A(B)} is the Koszul dual of $A(B)$.
\end{proposition}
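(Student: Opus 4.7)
The plan is to verify that $A(B)$ is a quadratic algebra over the semisimple subalgebra $A_0 := \bigoplus_{i\in V(B)}\C e_i$, and then to identify the defining relations of $A^!(B)$ as the orthogonal complement of those of $A(B)$ under the natural pairing. By Proposition~\ref{prop:ProjRes} the algebra $A(B)$ is Koszul, hence quadratic, so that $A(B) = T_{A_0}(V)/(R)$, where $V = A(B)_1$ is the $A_0$-bimodule spanned by the arrows of $Q(B)$ and $R\subset V\otimes_{A_0}V$ is spanned by those length two paths $x_{ij}x_{jk}$ with $i,k\in V_0(B)$.

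The key observation is that because $B$ is bipartite, every length two path in $Q(B)$ has both endpoints in the same part. Accordingly, $V\otimes_{A_0}V = W_0\oplus W_1$, where $W_\epsilon$ is spanned by length two paths with endpoints in $V_\epsilon(B)$. By definition, the relations are $R=W_0$ for $A(B)$ and $W_1$ for $A^!(B)$.

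By construction, the Koszul dual is $A(B)^! = T_{A_0}(V^*)/(R^\perp)$. The $A_0$-bimodule dual $V^*$ is canonically identified with $V$ by reversing the orientation of each arrow, which is available because $Q(B)$ already contains both orientations of every edge. Under this identification, the natural basis of $V^*\otimes_{A_0}V^*$ is indexed by length two paths, with the dual basis element of a given path being its reverse. Distinct length two paths pair to zero; since reversal preserves the bipartite decomposition (both $W_0$ and $W_1$ are closed under reversal), the summands $W_0$ and $W_1$ are mutually orthogonal and together span the whole space, so $R^\perp = W_1$. This matches the defining relations of $A^!(B)$.

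The main delicacy is setting up the bimodule structure on $V^*$ correctly, since duality swaps the source and target of each arrow. This is harmless here only because $Q(B)$ is self-dual as a quiver, so that $V^*$ can be realized inside the same path algebra as $V$. Once this identification is in hand, the orthogonal complement computation reduces to the elementary observation that the two parts of the bipartition of length two paths are disjoint.
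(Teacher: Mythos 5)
Your proof is correct and follows essentially the same route as the paper: both identify $A(B)$ as the quadratic algebra $T_R(M)/(W_0)$ and compute the annihilator of the relations to be the span of length-two paths through $V_1(B)$. You simply spell out the details the paper leaves implicit, namely the decomposition of the degree-two part along the bipartition and the identification of $M^*$ with $M$ via path reversal.
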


\begin{proof} Let $R=\C\{e_i: i\in V(B)\}$ be the ring generated by the idempotents $e_i$, and
$M=\C\{x_{ij}: x_{ij} \in E(Q(B))\}$ be the $R$-bimodule spanned by the edges of $Q(B)$. Then the tensor algebra $T:=T_R(M)$ is the
path algebra of $Q(B)$. 

We have $A(B)=T/TWT$, where $W \subseteq M\otimes_R M$ is the $R$-bimodule of length two paths which start and end in $V(0)$, as in Definition \ref{def:A(B)}. 
The annihilator $W^\perp \subseteq M^* \otimes M^*$ 
is easily identified with the set of length two paths which start and end in $V(1)$,
which shows that the quadratic dual $T_R(M^*)/T_R(M^*)W^\perp T_R(M^*)$ is isomorphic to $A^!(B)$. 
\end{proof}

Let $L_i$ be a simple $A(B)$-module and $P_i$ its projective cover. The {\em injective hull} $I_i$ of $L_i$, as a vector space, is isomorphic to the linear dual
of $P_i$, with the negative path length grading. This is naturally an $A(B)^{op}$-module. There is an isomorphism $A(B)\xrightarrow{\cong} A(B)^{op}$ sending a path
$x$ to the reverse path $\overline{x}$. This defines the action of $A(B)$ on $I_i=P_i^*$.

Similarly, costandard modules $\Lambda_i$ are the linear duals of the corresponding
standard modules $V_i$, with the action of $A(B)$ defined via the bar isomorphism as above. This in particular means that
$\Lambda_i\cong L_i$ for $e_i\in V_{1}(B)$, and $\Lambda_i\cong I_i$ for $e_i\in V_{0}(B)$.

\begin{remark}\label{rmk:d}
Note that ``acting on the linear dual via the bar isomorphism'' is a contravariant auto-equivalence of {\em ungraded} finitely generated $A(B)$-modules,
denoted $d$, with the properties that $dL_i=L_i$, $dP_i=I_i$ and $dV_i=\Lambda_i$. However, $d$ reverses gradings and grading shifts, for instance $d(M\{1\})= (dM)\{-1\}$,
and similarly for morphisms. \qed
\end{remark} 

\subsection{The Grothendieck group}\label{subsec:K0}
The category $A(B)\operatorname{-mod}$ of finite dimensional graded $A(B)$-modules is an abelian category. The {\em graded Grothendieck group} 
$K_0:=K_0(A(B)\operatorname{-mod})$ is, by definition, the $\Z[q,q^{-1}]$-module generated by the isomorphism classes of modules,
modulo the relations $[A]-[B]+[C]=0$ for any short exact sequence of modules $0\!\to\! A\!\to\! B\!\to\! C\!\to\! 0$; and $q[A]=[A\{1\}]$, where
$\{\cdot\}$ denotes the grading shift.

\begin{definition}\label{def:EulerForm} The {\em graded Euler form} on $K_0$ is defined by
$$\langle [M],[N]\rangle := \sum_{i=0}^\infty (-1)^i \qdim \Ext^i_{A(B)} (M,N).$$
Here $\qdim$ denotes the graded dimension, that is, $\qdim \Ext^i_{A(B)} (M,N)$ is the Laurent polynomial in which the coefficient of $q^k$ 
is the dimension of the degree $k$ piece of $\Ext^i_{A(B)} (M,N)$.
\end{definition}

The graded Euler form is a non-degenerate $q$-sesqui-linear form on $K_0(A(B) \operatorname{-mod})$.
Here $q$-sesqui-linear means that for any $f\in \Z[q,q^{-1}]$ and $M,N \in A(B)\operatorname{-mod}$, with $\bar{f}(q):=f(q^{-1})$,
$$\langle \bar{f}[M],[N]\rangle={f}\langle [M],[N]\rangle=\langle [M],f[N]\rangle.$$ 
Sesquilinearity is a reflection of the behaviour of the $\Hom$ functor with respect to grading shifts: 
$$\Hom(M\{-1\}, N)=\Hom\{1\}(M,N)=\Hom(M,N\{1\}).$$

\begin{proposition}
$K_0$ is a free $\Z[q,q^{-1}]$-module of rank $n=|V(B)|$.
The classes of simple modules $\{[L_i]\}_{i\in V(B)}$, indecomposable projectives $\{[P_i]\}$, indecomposable injectives $\{[I_i]\}$, standard modules $\{[V_i]\}$, and costandard modules $\{[\Lambda_i]\}$ all form bases for $K_0$. 
\end{proposition}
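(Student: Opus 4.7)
The plan is to establish the basis statements in a cascade, beginning with the simples and bootstrapping to the remaining four families. The first step invokes the graded Jordan--H\"older theorem: since $A(B)$ is finite-dimensional and $\Z_{\geq 0}$-graded with semisimple degree-zero part $\bigoplus_i \C e_i$, every finitely generated graded module is finite-dimensional and admits well-defined graded composition multiplicities $[M : L_i\{k\}] \in \Z_{\geq 0}$. These assemble into a $\Z[q,q^{-1}]$-linear map
$$\phi : K_0 \longrightarrow \bigoplus_{i \in V(B)} \Z[q, q^{-1}], \qquad [M] \longmapsto \Bigl( \sum_k [M : L_i\{k\}]\, q^k \Bigr)_{i\in V(B)},$$
which sends $[L_i]$ to the $i$-th standard basis vector. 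Hence $\{[L_i]\}$ both spans and is linearly independent over $\Z[q,q^{-1}]$, and $K_0$ is free of rank $n = |V(B)|$.

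For the indecomposable projectives, spanning follows by taking the alternating sum in $K_0$ of the linear projective resolutions constructed in Proposition~\ref{prop:ProjRes}, which expresses each $[L_i]$ as a $\Z[q, q^{-1}]$-combination of the $[P_j]$. For linear independence, the key computation uses projectivity of $P_j$: the Ext-groups $\Ext^k(P_j, L_i)$ vanish for $k \geq 1$, and $\Hom(P_j, L_i) = e_j L_i$ is one-dimensional in degree $0$ precisely when $i = j$. Hence
$$\langle [P_j], [L_i] \rangle = \qdim \Hom(P_j, L_i) = \delta_{ij}.$$
If $\sum_j g_j(q) [P_j] = 0$, pairing against $[L_i]$ and using the $q$-sesquilinearity of the Euler form gives $\overline{g_i(q)} = 0$, so $g_i = 0$; thus $\{[P_j]\}$ is also a basis.

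For the standard modules, I would use condition (1) of Definition~\ref{def:StdKoszul}, as verified in Proposition~\ref{prop:KoszulDual}: with the partial order $V_1(B) < V_0(B)$, the kernel of $\pi_i : V_i \to L_i$ admits a filtration whose successive quotients are graded shifts of $L_j$'s with $j < i$. In $K_0$ this gives
$$[V_i] = [L_i] + \sum_{j < i} c_{ij}(q)\, [L_j], \qquad c_{ij}(q) \in \Z[q, q^{-1}].$$
Fixing a refinement of the partial order to a total order, the change-of-basis matrix from $\{[V_i]\}$ to $\{[L_j]\}$ is unitriangular with determinant $1$, hence invertible in $M_n(\Z[q, q^{-1}])$. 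So $\{[V_i]\}$ is a basis.

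For the indecomposable injectives and the costandard modules, I would invoke the contravariant autoequivalence $d$ of Remark~\ref{rmk:d}, satisfying $d(P_i) = I_i$, $d(V_i) = \Lambda_i$, $d(L_i) = L_i$, and $d(M\{k\}) = (dM)\{-k\}$. The induced map on $K_0$ is a $\Z$-linear, $q$-antilinear involution: $d(f(q)\, x) = f(q^{-1})\, d(x)$ and $d^2 = \id$. A direct check shows that $d$ carries any $\Z[q,q^{-1}]$-basis of $K_0$ to another such basis: if $v = \sum f_j(q)[P_j]$ then $d(v) = \sum \overline{f_j(q)} [I_j]$, and symmetrically for recovering $v$ from $d(v)$. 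Applied to $\{[P_j]\}$ this produces the basis $\{[I_j]\}$, and applied to $\{[V_j]\}$ it produces $\{[\Lambda_j]\}$. The only mildly subtle point in the whole argument is this $q$-antilinear interchange -- coefficients transform via $f(q) \mapsto f(q^{-1})$ rather than being preserved -- but since the bar involution is a bijection of $\Z[q,q^{-1}]$ this poses no genuine obstacle.
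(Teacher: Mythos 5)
Your proof is correct, and for the simples, projectives, and injectives it runs essentially parallel to the paper's: graded Jordan--H\"older for the simples, the linear projective resolutions for spanning by projectives, and the $q$-antilinear involution $d$ to transport the projective basis to the injective one. The genuine divergence is in how the standard and costandard modules are handled. The paper invokes the biorthogonality $\Ext^\bullet(V_k,\Lambda_l)$ from \cite{CPS}, giving $\langle [V_k],[\Lambda_l]\rangle=\delta_{kl}$, and then deduces independence of both families simultaneously from a positivity argument: a relation $\sum_k f_k(q)[V_k]=0$ forces $\sum_k f_k(q^{-1})f_k(q)=0$, whose constant term is a sum of non-negative integers. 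You instead read off from condition (1) of Definition~\ref{def:StdKoszul} that the transition matrix from $\{[V_i]\}$ to $\{[L_j]\}$ is unitriangular with respect to a refinement of the order $V_1(B)<V_0(B)$, hence invertible over $\Z[q,q^{-1}]$, and then apply $d$ to get the costandards. Your route is more self-contained (it avoids the CPS duality input) and it delivers spanning and independence in one stroke, whereas the paper's independence-only argument leaves the passage from ``$n$ independent elements'' to ``basis'' implicit; on the other hand, the paper's computation establishes along the way the useful fact that standards and costandards are dual bases under the Euler form, which your argument does not record. A further small improvement on your side: for the projectives you supply an explicit independence proof via $\langle [P_j],[L_i]\rangle=\delta_{ij}$ and sesquilinearity, where the paper simply counts that there are $n$ spanning classes. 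Both are fine, but yours is the more careful bookkeeping over $\Z[q,q^{-1}]$.
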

\begin{proof}
 The existence of a Jordan-Holder filtration implies that the classes of simple modules form a spanning set for $K_0$. The uniqueness of the 
 Jordan-Holder composition factors means that the classes of simple modules indeed form a basis, and the rank of $K_0$ is $n$.
 Every simple module has a finite projective resolution, hence the isomorphism classes of indecomposable projective modules also span, and since there is $n$ of them they form a basis.

 For any quasi-hereditary algebra $A$, costandard modules form a right dual set to standard modules in $A$-mod \cite[3.11]{CPS}
 meaning that 
 $$\Ext^i(V_k,\Lambda_l)=
 \begin{cases}
  \C \text{ if }k=l\text{ and }i=0, \\
  0 \text{ otherwise.}
 \end{cases}
$$
Hence, $\langle [V_k], [\Lambda_l]  \rangle= \delta_{kl}$. This implies that standard and costandard modules are both independent
sets. To see this, assume for example that there is a linear dependence $\sum_{k=1}^n f_k(q) [V_k]=0$, where $f_k(q) \in \Z[q,q^{-1}]$. Then
$$0=\Big\langle \sum_k f_k(q) [V_k], \sum_{l=1}^n f_l(q) [\Lambda_l]\Big\rangle = \sum_{k=1}^n f_k(q^{-1})f_k(q).$$
Note that $f_k(q^{-1})f_k(q)$ has a non-negative constant term, which is zero only if $f_k=0$. Hence, $f_k=0$ for all $k$, a contradiction.
Therefore, standard and costandard modules each form bases.

The involution $d$ of Remark~\ref{rmk:d} induces a $q$-antilinear map on $K_0$ with $d[L_i]=[L_i]$. So, if 
$[L_k]=\sum_{i=1}^n c_i(q)[P_i]$ with $c_i(q) \in \Z[q,q^{-1}]$, then $[L_k]= d[L_k]=\sum_{i=1}^n c_i(q^{-1}) [I_i]$. Hence the indecomposable injective modules $\{[I_i]\}$ also span and hence form a basis.
\end{proof}

\begin{remark} The involution $d$  satisfies a symmetry property with respect to the graded Euler form:
$\langle [M],[N] \rangle =\langle d[N], d[M] \rangle.$ To check this, write $[M]$ and $[N]$ as $\Z[q,q^{-1}]$-linear combinations of simple modules.
\end{remark}

\section{q-Lattices}\label{sec:qLattice}
Recall that a {\em lattice} is a finitely generated free abelian group with a symmetric bilinear form, typically required to be non-degenerate. The following
definition lifts lattices to free modules over the ring of Laurent polynomials: the guiding example is the graded Grothendieck group of Section~\ref{sec:Algebra}. In this section we establish the analogues of some basic notions and facts of lattice theory. 

\begin{definition}
 A {\em q-lattice} is a finitely generated free $\Z[q,q^{-1}]$-module $L$ with a non-degenerate sesquilinear\footnote{That is, $q$-anti-linear in the first argument, and linear in the second argument, as described after Definition~\ref{def:EulerForm}.} form
 $$
 	\langle \bullet,\bullet \rangle : L \times L \longrightarrow \Z[q,q^{-1}];
$$
and equipped with a $q$-anti-linear involution $d : L \longrightarrow L$ such that for all $x,y\in L$
$$
	\langle x,y \rangle = \langle d(y),d(x) \rangle,
$$
and 
$
	d_{q=1}: L\otimes_{\Z[q,q^{-1}]}\Z \longrightarrow L\otimes_{\Z[q,q^{-1}]} \Z \text{ is the identity map.}
$
\end{definition}

After setting $q=1$, a $q$-lattice becomes an ordinary lattice, although note that it is possible for a non-degenerate $q$-pairing to become degenerate at $q=1$. The involution $d$ deforming the identity is used to give the appropriate $q$-analog for the symmetry of the form.  

Since the sesquilinear form is not symmetric, one has to distinguish between various ``left'' and ``right'' notions: left and right duals, orthogonality and complements will be defined later in this section. The involution $d$ can be used to move between opposite side notions.  

\begin{example}\label{ex:K0qLattice}
In this language, the Grothendieck group $K_{0}$ of the category of finite-dimensional graded $A(B)$-modules is a $q$-lattice under the graded Euler form, with $d$ induced by the duality map defined in Remark~\ref{rmk:d}. 
On $K_0$, $d$ is the unique $q$-anti-linear map which fixes the classes $[L_{i}]$ of simple modules. 
\end{example}

\subsection{Gram matrices and change of basis} Recall that given a basis \linebreak $\mathcal B =\{b_1,...,b_n\}$
for a classical $\Z$-lattice $L$,
the bilinear form is encoded in the {\em Gram matrix} $A = A_{\mathcal B}$, where $A_{ij}=\langle b_i,b_j\rangle \in \Z$.
Given any two elements $x,y\in L$, let $x_{\mathcal B}$ and $y_{\mathcal B}$ denote the column vectors expressing $x$ and $y$, 
respectively, in the basis $\mathcal B$. Then $\langle x,y \rangle= x_{\mathcal B}^t Ay_{\mathcal B}$, where the superscript $t$
denotes the matrix transpose. 

Let $\mathcal B' =\{b'_1,...,b'_n\}$ be a different basis for $L$, and $T=T_{\mathcal B}^{\mathcal B'}$ the change of basis matrix.
That is, the $i$-th column of $T$ is the vector $(b'_i)_\mathcal B$. Then the Gram matrix $A_{\mathcal B'}$ is $T^tA_{\mathcal B}T$.
The Gram matrix determines the lattice up to isomorphism, and two lattices with Gram matrices $A$ and $A'$ are isomorphic if and 
only if there exists an integer matrix $T$, invertible over $\Z$, such that $A'=T^tAT$.

Similarly, given a choice of a basis $\mathcal B =\{b_1,...,b_n\}$
for a $q$-lattice $L$,
the sesquilinear form is again encoded in the Gram matrix $A_{\mathcal B}$, where $A_{ij}=\langle b_i,b_j\rangle \in \Z[q,q^{-1}]$.
Given elements $x,y\in L$, and $x_{\mathcal B}$ and $y_{\mathcal B}$ the column vectors expressing $x$ and $y$ 
in the basis $\mathcal B$, we have $\langle x,y \rangle= x_{\mathcal B}^* Ay_{\mathcal B}$, where the superscript $*$
denotes the matrix transpose composed with the involution $q \mapsto q^{-1}$. 

Let $\mathcal B' =\{b'_1,...,b'_n\}$ be another basis of $L$, and again let $T$ denote the change of basis matrix, 
where the $i$-th column of $T$ is the vector $(b'_i)_\mathcal B$. Note that $T$ is an invertible matrix over $\Z[q,q^{-1}]$, 
so that $\det T=\pm q^k$ for some $k\in\Z$. Then the Gram matrix with respect to the basis $\mathcal B'$ is $T^{*}AT$.
It is still true that the Gram matrix determines the $q$-lattice up to isomorphism, and two 
$q$-lattices given by Gram matrices $A$ and $A'$ are isomorphic if and 
only if there exists an invertible matrix $T$ over $\Z[q,q^{-1}]$ such that $A'=T^{*}AT$.

Note in particular that the determinant of the Gram matrix is an isomorphism invariant of both classical lattices and $q$-lattices, that is, it is independent of the choice of basis in which the Gram matrix is written. Hence, it is also called the {\em determinant of the lattice}.

\subsection{Dual $q$-lattices}
In classical lattice theory, given an integer lattice $L$, the {\em lattice dual} is defined as $L^{\vee}:=\{x\in L\otimes_{\Z}\Q \, | \,  \langle x, v \rangle \in \Z \quad \forall v\in L\}$. The lattice dual is typically not an integer lattice, as elements may pair non-integrally with each other. The lattice dual is a free abelian group of the same rank as $L$, with a $\Q$-valued symmetric bilinear form, and includes $L$.

Given a basis  $\mathcal B =\{b_1,...,b_n\}$ of the classical lattice $L$, the dual basis in $L\otimes_{Z}\Q$ is the unique basis $\mathcal B^{\vee} =\{b_1^{\vee},...,b_n^{\vee}\}$ with the property that $\langle b_{i},b_{j}^{\vee}\rangle=\delta_{ij}$ for all $1\leq i,j \leq n$. The dual basis $\mathcal B^{\vee}$ is a basis for $L^{\vee}$. 

If $L=L^{\vee}$,  the lattice is called {\em unimodular}.  A lattice is unimodular if and only if its determinant, the determinant of the Gram matrix, is a unit of $\Z$, that is, $\pm 1$.  The Gram matrix $A_\mathcal B$ also arises as the change of basis matrix $T_{\mathcal B^{\vee}}^{\mathcal B}$, or in other words the matrix of the inclusion map $L \hookrightarrow L^{\vee}$, with respect to the bases $\mathcal B$ and $\mathcal B^{\vee}$. 

We now describe the analogous duality notions for $q$-lattices.

\begin{definition}
Given a $q$-lattice $L$, we define the {\em right dual} $L^{\vee}$ of $L$ to be 
 $$L^{\vee}:=\{x\in L\otimes_{\Z[q,q^{-1}]}\Q(q) \, | \,  \langle v,x \rangle \in \Z[q,q^{-1}] \quad \forall v\in L\}.$$
Given a basis  $\mathcal B =\{b_1,...,b_n\}$ of $L$, the {\em right dual basis} in $L\otimes_{Z[q,q^{-1}]}\Q(q)$ is the unique basis $\mathcal B^{\vee} =\{b_1^{\vee},...,b_n^{\vee}\}$ with the property that $\langle b_{i},b_{j}^{\vee}\rangle=\delta_{ij}$ for all $1\leq i,j \leq n$. 

Similarly, the {\em left dual} of $L$ is $\ld L:=\{x\in L\otimes_{\Z[q,q^{-1}]}\Q(q) \, | \,  \langle x,v \rangle \in \Z[q,q^{-1}] \quad \forall v\in L\}$,
and the {\em left dual basis} of $\mathcal B$ is the unique basis $\ld\mathcal B =\{\ld b_1,...,\ld b_n\}$ with the property that $\langle \ld b_{i},b_{j}\rangle=\delta_{ij}$ for all $1\leq i,j \leq n$. Note that $d$ intertwines left/right duality in the sense that $(d(\mathcal B))^\vee = d ( \ld \mathcal B)$.

\end{definition}

\begin{remark} The left and right dual $q$-lattices are indeed free $\Z[q,q^{{-1}}]$-modules, with basis given by the left and right dual bases, respectively.  They are not necessarily $q$-lattices, as the pairing on them might not be valued in the Laurent polynomial ring $\Z[q,q^{-1}]$.
As for the existence and uniqueness of the dual bases, observe that given a non-degenerate $q$-sesquilinar pairing, one can orthogonalise bases in a $\Q(q)$-vector space using the Gram-Schmidt process (without normalisation). This implies that left and right orthogonal complements exist and are unique. Given a basis vector $b_{i} \in \mathcal B$, the dual basis vector $\ld b_{i}$ (or $b_{i}^{\vee}$) lies in the one-dimensional left (or right) orthogonal complement of the subspace spanned by $\mathcal B \setminus \{b_{i}\}$, and can be chosen so that $\langle \ld b_{i}, b_{i}\rangle=1$, (or $\langle b_{i},b_{i}^\vee \rangle=1$).
\end{remark}

For example, in $K_0:=K_0(A(B)\operatorname{-mod})$, costandard modules form a right dual basis to standard modules, indecomposable projectives are left dual to the simple modules, and indecomposable injectives
are right dual to the simple modules. This, as well as the asymmetry of the orthogonality relation, is illustrated in the following example.

\begin{example}\label{ex:LRDualBases}
 Let $B$ denote the complete bipartite graph on three vertices with $V_0(B)=\{e_1, e_2\}$ and $V_1(B)={e_3}$.
 Then the Grothendieck group $K_0$ of finitely generated graded $A(B)$-modules is a rank three $q$-lattice. The
 isomorphism classes of indecomposable projective modules $[P_i]$, for $i=1,2,3$, form a basis, and with respect
 to this basis the Gram matrix is symmetric:
 $$
 \begin{bmatrix}
  1 & 0 & q \\
  0 & 1 & q \\
  q & q & 1+2q^2 \\
 \end{bmatrix}
 $$
 The basis $\{[P_1], [P_2], [P_3]\}$ is left dual to the basis given by the simple modules $\{[L_1], [L_2], [L_3]\}$,
so that $\langle [P_i], [L_j]  \rangle= \delta_{ij}$. However, $\langle[L_j], [P_i] \rangle$ is not necessarily equal to $\delta_{ji}$:
 from Proposition~\ref{prop:ProjRes} we see that
 $$[L_1]=(1+q^2)[P_1]+q^2[P_2]-q[P_3],$$ 
 $$[L_2]=q^2[P_1]+(1+q^2)[P_2]-q[P_3],$$ 
 $$[L_3]=-q[P_1]-q[P_2]+[P_3].$$
 Hence, for example, $\langle [L_1],[P_1] \rangle=q^{-2}$, and $\langle [L_3],[P_1] \rangle=q-q^{-1}$. Note that at the value $q=1$ one recovers the symmetric ungraded
 Euler form, so that $\langle [L_j],[P_i] \rangle_{q=1}=\delta_{ji}$.
 
 The standard modules in this example are $V_1=P_1$, $V_2=P_2$, and $V_3=L_3$; the costandard modules are $\Lambda_1=I_1$, $\Lambda_2=I_2$ and $\Lambda_3=L_3$.
 The classes of the costandard modules form a right dual basis to the classes of the standard modules. Note that in the ungraded case, when $q=1$, the involution
 $d$ is the identity on $K_0$, and hence $[V_i]_{q=1}=[\Lambda_i]_{q=1}$. That is, in the ungraded case both the standard and the costandard modules form
 orthonormal bases for $K_0$ at $q=1$. For general $q$, however, the Gram matrix in the basis $\{V_1,V_2,V_3\}$ is:
 $$
 \begin{bmatrix}
  1 & 0 & 0 \\
  0 & 1 & 0 \\
  q-q^{-1} & q-q^{-1} & 1 \\
 \end{bmatrix}
 $$
 This pattern holds more generally for the pairings between graded standard modules for bipartite algebras: $\langle [V_i],[V_j] \rangle=\delta_{ij}$ unless $i \in V_1(B)$, $j\in V_0(B)$ and $i$ is adjacent to $j$ in $B$, in which case $\langle [V_i],[V_j] \rangle=q-q^{-1}$.
\end{example}

\begin{proposition}\label{prop:GramChangeBasis}
 If $L$ is a $q$-lattice with $\mathcal B$ a basis, then the Gram matrix $A_{\mathcal B}$ coincides with the matrix of
the embedding $L \hookrightarrow L^{\vee}$ with respect to the bases $\mathcal B$ and $\mathcal B^{\vee}$. The matrix of the embedding $L \hookrightarrow \ld L$ with respect to the bases $\mathcal B$ and $\ld \mathcal B$ coincides with the conjugate $\overline{A}_{\mathcal B}$, that is, the involution $q \mapsto q^{-1}$ applied to each matrix entry of $A_{\mathcal B}$.
\end{proposition}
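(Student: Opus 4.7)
The plan is to prove both statements by direct expansion of $b_i$ in the appropriate dual basis and then reading off the coefficients one at a time by pairing against a basis element of $\mathcal B$. The only ingredients needed are the sesquilinearity of $\langle\bullet,\bullet\rangle$ and the defining orthogonality relations $\langle b_j,b_k^\vee\rangle=\delta_{jk}$ and $\langle \ld b_k,b_j\rangle=\delta_{kj}$; in particular, no homological input is required, and the argument mirrors the classical lattice-theoretic proof that the Gram matrix coincides with the matrix of the inclusion $L\hookrightarrow L^\vee$.

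For the first statement, I would let $M$ denote the matrix of the embedding $L\hookrightarrow L^\vee$ with respect to $\mathcal B$ and $\mathcal B^\vee$, so that by definition $b_i=\sum_k M_{ki}\,b_k^\vee$, with the $i$-th column of $M$ expressing the image of $b_i$ in the basis $\mathcal B^\vee$. Pairing $b_j$ against both sides on the left and using linearity in the second argument together with the right-dual orthogonality relation, the sum telescopes to $\langle b_j,b_i\rangle=M_{ji}$. Since the left-hand side is $A_{ji}$ by the definition of the Gram matrix, one concludes $M=A_{\mathcal B}$.

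The second statement is proved by the same method, with the single twist that $q$-anti-linearity in the first slot now introduces a conjugation. Writing $b_i=\sum_k N_{ki}\,\ld b_k$ and pairing $b_j$ against both sides on the right, the sum collapses via $q$-anti-linearity and the left-dual orthogonality to $\langle b_i,b_j\rangle=\overline{N_{ji}}$. Comparing with $A_{ij}$ on the left yields $N_{ji}=\overline{A_{ij}}$, identifying $N$ entrywise with the conjugate $\overline{A}_{\mathcal B}$ of the Gram matrix. There is no real obstacle beyond careful bookkeeping — tracking which argument of the pairing carries the bar and keeping the columns-of-images convention for matrices of linear maps straight — so the main content of the proposition is really the observation that left versus right duality corresponds precisely to the location of the conjugation arising from the sesquilinearity of $\langle\bullet,\bullet\rangle$.
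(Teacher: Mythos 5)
Your proof is correct and essentially identical to the paper's: both arguments expand $b_i$ in the relevant dual basis and extract the coefficients by pairing against the $b_k$ on the appropriate side, using only sesquilinearity and the defining relations $\langle b_j,b_k^{\vee}\rangle=\delta_{jk}$ and $\langle \ld b_k,b_j\rangle=\delta_{kj}$. One minor bookkeeping remark: your computation $N_{ji}=\overline{A_{ij}}$ literally identifies $N$ with the conjugate \emph{transpose} of $A_{\mathcal B}$ rather than with $\overline{A}_{\mathcal B}$ itself, but the paper's own proof makes exactly the same identification (and in the applications the relevant Gram matrices are symmetric), so your writeup faithfully reproduces the intended argument.
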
 

\begin{proof}
Like the classical case, this is elementary linear algebra. We need to show that $b_{i}= \sum_{j=1}^{n} \langle b_{j},b_{i} \rangle b_{j}^{\vee}$, and $b_{i}= \sum_{j=1}^{n}\overline{\langle b_{i}, b_{j}\rangle} \,\, \ld b_{j}$, for each $i=1,...,n$. For the first equality, take the pairing on the left with each basis vector $b_{k}$; for the second, do the same but on the right.
\end{proof}

\begin{example}\label{ex:AllMatrices}
As before, let $K_0$ denote the Grothendieck group of finitely generated graded $A(B)$-modules, a $q$-lattice. Let $G$ denote the Gram matrix in the basis given by the classes of indecomposable projective modules $\{[P_i]\}$. Simple modules form a right dual basis to this, so Proposition~\ref{prop:GramChangeBasis} means that the $i$-th column of $G$ is the vector $[P_i]$ written in the basis $\{[L_i]\}$. Applying $d$, one obtains that the matrix whose $i$-th column is the class of the injective module $[I_i]$, written in the basis $\{[L_i]\}$, is $\overline{G}$, where the bar denotes the involution $q\mapsto q^{-1}$.
In turn, via changes of bases, this implies that the Gram matrix in the basis $\{[L_i]\}$ is $\overline{G}^{-1}$, and the Gram matrix in the basis $\{[I_i]\}$ coincides with $G$.
\end{example}

\begin{definition}
A $q$-lattice is {\em unimodular} if $L^{\vee}=L$.
\end{definition}

Luckily, we don't need to distinguish between left and right unimodularity: even without the assumption of the existence of the involution $d$, it is true that $L^{\vee}=L$ if and only if $\ld L =L$. To see this, observe that if $L^{\vee}=L$ for a $q$-lattice $L$ with basis $\mathcal B$, then $\mathcal B^{\vee}$ is also a basis for $L$. Since, by definition, $\ld(\mathcal B^{\vee}) = \mathcal B$, it follows that $\ld L =L$. Observe also that a $q$-lattice is unimodular if and only if the determinant (of the Gram matrix) is a unit in $\Z[q,q^{-1}]$, that is, $\det L=\pm q^{k}$.

\subsection{$q$-Lattice gluing}
In classical lattice theory, {\em lattice gluing} is an important construction which allows for producing larger, indecomposable lattices from smaller components, see for example \cite[Chapter 4.3]{CS}. To glue two lattices $L_{1}$ and $L_{2}$, one takes their direct sum, and then adjoins carefully selected elements of $L_{1}^{\vee}\oplus L_{2}^{\vee}$ to obtain an integer lattice. Of course the construction can be generalised to more than two glued components. 

Lattice gluing is of particular interest when the resulting lattice is {\em unimodular}, and used in the classification of unimodular lattices of small rank \cite[Chapter 16]{CS}. It is therefore important to understand when it may be possible to glue two lattices together so that the end result is unimodular. In this case, the original lattices are embedded as mutual orthogonal complements in the unimodular lattice. A theorem (this formulation due to \cite{BHN}, also in \cite{CS}) provides an important necessary condition for when this is possible:

\begin{theorem}\label{thm:ClassGlue}
\cite{BHN} Let $L$ be a unimodular lattice, and $L_{1}$ and $L_{2}$ sub-lattices which are mutual orthogonal complements of each other within $L$. Then
\begin{enumerate}
\item The images of the orthogonal projections of $L$ onto $L_{1} \otimes_\Z \Q$ and $L_{2}\otimes_\Z \Q$ are $L_{1}^{\vee}$ and $L_{2}^{\vee}$, respectively.
\item The {\em glue groups} of $L_{1}$ and $L_{2}$ are isomorphic, that is, $L_{1}^{\vee}/L_{1}\cong L_{2}^{\vee}/L_{2}$.
\item The determinants of $L_{1}$ and $L_{2}$ are equal.
\end{enumerate}
\end{theorem}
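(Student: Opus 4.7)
The plan is to reduce all three statements to a single index, namely $d := [L : L_{1} \oplus L_{2}]$. First I would establish the sandwich
\begin{equation*}
L_{1} \oplus L_{2} \;\subseteq\; L \;\subseteq\; L_{1}^{\vee} \oplus L_{2}^{\vee}.
\end{equation*}
For the left inclusion, $L_{1} + L_{2} \subseteq L$ is immediate, and the sum is direct because any element of $L_{1} \cap L_{2}$ pairs trivially with both summands, hence with all of $L \otimes_{\Z} \Q = L_{1} \otimes_{\Z}\Q \oplus L_{2} \otimes_{\Z}\Q$, and therefore vanishes by nondegeneracy of $L$. For the right inclusion, let $\pi_{i} : L \otimes_{\Z} \Q \to L_{i} \otimes_{\Z} \Q$ be the orthogonal projections; for $x \in L$ and $v \in L_{1}$, unimodularity of $L$ gives $\langle \pi_{1}(x), v\rangle = \langle x,v\rangle \in \Z$, so $\pi_{1}(x) \in L_{1}^{\vee}$, and similarly for $\pi_{2}$.

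Since $L_{1} \perp L_{2}$, the Gram matrix of $L_{1} \oplus L_{2}$ is block diagonal, so $(L_{1} \oplus L_{2})^{\vee} = L_{1}^{\vee} \oplus L_{2}^{\vee}$ with index $\det(L_{1}) \det(L_{2})$ over $L_{1} \oplus L_{2}$. Applying the elementary duality identity $[N^{\vee} : M^{\vee}] = [N : M]$ for finite-index inclusions of lattices, together with the unimodularity $L^{\vee} = L$, the sandwich yields
\begin{equation*}
d^{2} \;=\; [L : L_{1} \oplus L_{2}] \cdot [(L_{1} \oplus L_{2})^{\vee} : L^{\vee}] \;=\; \det(L_{1}) \det(L_{2}).
\end{equation*}

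The key step is then to construct, for $i = 1,2$, a homomorphism
\begin{equation*}
\phi_{i} : L/(L_{1} \oplus L_{2}) \longrightarrow L_{i}^{\vee}/L_{i}, \qquad x \mapsto \pi_{i}(x),
\end{equation*}
and show it is injective. Well-definedness uses the sandwich above. For injectivity, suppose $\pi_{1}(x) \in L_{1}$ for $x \in L$; then $x - \pi_{1}(x) = \pi_{2}(x) \in L \cap (L_{2} \otimes_{\Z} \Q)$, and the hypothesis that $L_{1}$ and $L_{2}$ are \emph{mutual} orthogonal complements inside $L$ (not merely orthogonal inside $L$) forces $L \cap (L_{2} \otimes_{\Z} \Q) = L_{2}$, so $x \in L_{1} \oplus L_{2}$. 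This identification is the one genuine subtlety in the argument.

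Finally, since $|L_{i}^{\vee}/L_{i}| = \det(L_{i})$, the injectivity of $\phi_{1}$ and $\phi_{2}$ gives $d \leq \det(L_{1})$ and $d \leq \det(L_{2})$; combined with $d^{2} = \det(L_{1}) \det(L_{2})$, this forces $\det(L_{1}) = \det(L_{2}) = d$, which is (3). The $\phi_{i}$ are then isomorphisms between finite groups of equal size: the composition $\phi_{2} \circ \phi_{1}^{-1}$ gives the glue-group isomorphism in (2), and the surjectivity of $\phi_{1}$ combined with $\pi_{1}(L) \subseteq L_{1}^{\vee}$ from the sandwich yields $\pi_{1}(L) = L_{1}^{\vee}$, proving (1).
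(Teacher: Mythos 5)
Your proof is correct. Note that the paper itself offers no proof of Theorem~\ref{thm:ClassGlue}: it is quoted from \cite{BHN}, and the relevant comparison is with the paper's proof of its $q$-analogue, Theorem~\ref{thm:qGlue}. There the authors establish statement (1) first (extend a basis of $L_{1}$ to a basis of $L$, dualize, and check that the projections of the dual basis vectors form a dual basis of $L_{1}$), feed the resulting surjectivity of the projection into (2), and then prove (3) by a separate determinant computation with dilation matrices, since counting arguments are unavailable over $\Z[q,q^{-1}]$. You run the logic in the opposite direction: you first pin everything to the single index $d=[L:L_{1}\oplus L_{2}]$ via the sandwich $L_{1}\oplus L_{2}\subseteq L=L^{\vee}\subseteq L_{1}^{\vee}\oplus L_{2}^{\vee}$ and the identity $d^{2}=\det(L_{1})\det(L_{2})$, and then obtain surjectivity of the glue maps --- hence (1) and (2) --- from injectivity by pure cardinality. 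This is precisely the ``geometric counting argument'' the paper's remark after Theorem~\ref{thm:qGlue} alludes to for the classical case, and it is cleaner there; its cost is that it is irreducibly finite-group-theoretic and would not survive the passage to $q$-lattices, where the glue groups are torsion $\Z[q,q^{-1}]$-modules. Two small points to tidy: the duality identity should be stated as $[M^{\vee}:N^{\vee}]=[N:M]$ for a finite-index inclusion $M\subseteq N$ (your application of it is correct, but your statement reverses the inclusion); and your directness argument for $L_{1}+L_{2}$ quietly invokes $L\otimes_{\Z}\Q=L_{1}\otimes_{\Z}\Q\oplus L_{2}\otimes_{\Z}\Q$, which is where the (implicit, but standard in this context) positive definiteness of $L$ enters --- for an indefinite form a sublattice can meet its own orthogonal complement, so this hypothesis should be flagged.
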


We end this section with the $q$-analogue of this theorem:

\begin{theorem}\label{thm:qGlue}
Let $L$ be a unimodular $q$-lattice and let $L_{1}$ and $L_{2}$  be sub-lattices such that $L/L_{1}$ and $L/L_{2}$ are free $\Z[q,q^{-1}]$-modules.  Assume further that $L_{1}^{\perp} =L_{2}$ ($L_{2}$ is the right orthogonal complement of $L_{1}$ in $L$), and $L_{1}=^{\perp}\!\! L_{2}$ ($L_{1}$ is the left orthogonal complement of $L_{2}$ in $L$).
Then:
\begin{enumerate}
\item Any $x\in L$ can be uniquely written as a sum $x=x_{1}+x_{2}$ where $x_{1}\in L_{1}\otimes_{\Z[q,q^{-1}]} \Q(q)$ and $x_{2}\in L_{2}\otimes_{\Z[q,q^{-1}]}  \Q(q)$. This defines left/right orthogonal projections 
$$\pi_{1}: L \to L_{1}\otimes_{\Z[q,q^{-1}]}\Q(q), \ \ \pi_{1}(x)=x_{1}$$  and 
$$\pi_{2}: L \to L_{2}\otimes_{\Z[q,q^{-1}]}\Q(q),\ \ \pi_{2}(x)=x_{2}.$$ Then the image of $\pi_{1}$ is $L_{1}^{\vee}$ and the image of $\pi_{2}$ is $\ld L_{2}$.
\item $L_{1}^{\vee}/L_{1} \cong \ld L_{2}/L_{2}$.
\item The determinants of $L_{1}$ and $L_{2}$ are equal up to units.
\end{enumerate}
\end{theorem}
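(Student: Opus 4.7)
The plan is to reduce (1) to a vector-space statement and then use unimodularity to lift a functional from $L_{1}$ back to $L$. Let $V := L \otimes_{\Z[q,q^{-1}]} \Q(q)$ and $V_{i} := L_{i} \otimes_{\Z[q,q^{-1}]} \Q(q)$. Tensoring the orthogonality hypotheses with $\Q(q)$ and using the non-degeneracy of the form on $V$ yields the vector-space decomposition $V = V_{1} \oplus V_{2}$ with $V_{1}^{\perp} = V_{2}$ and $\ld V_{2} = V_{1}$; the freeness of $L/L_{i}$ gives $L_{i} = L \cap V_{i}$, so the projections $\pi_{i}$ determined by this decomposition are well-defined on $L$ with $\ker \pi_{1} = L_{2}$ and $\ker \pi_{2} = L_{1}$. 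This already supplies the existence and uniqueness of the decomposition $x = x_{1} + x_{2}$ in (1).

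For $\pi_{1}(L) \subseteq L_{1}^{\vee}$, observe that for $x \in L$ and $u \in L_{1}$ the orthogonality $\langle V_{1}, V_{2}\rangle = 0$ gives $\langle u, \pi_{1}(x)\rangle = \langle u, x\rangle$, which lies in $\Z[q,q^{-1}]$ because $L$ is unimodular; the analogous argument gives $\pi_{2}(L) \subseteq \ld L_{2}$. For the reverse containment, fix $y \in L_{1}^{\vee}$ and consider the $\Z[q,q^{-1}]$-linear functional $\varphi : L_{1} \to \Z[q,q^{-1}]$ defined by $\varphi(u) = \langle u, y\rangle$. Because $L/L_{1}$ is free, $L_{1}$ is a direct summand of $L$, so $\varphi$ extends to a $\Z[q,q^{-1}]$-linear functional $\widetilde\varphi$ on $L$; unimodularity $L = L^{\vee}$ then represents $\widetilde\varphi$ as pairing on the left against a unique $x \in L$. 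Since $\langle u, x\rangle = \varphi(u) = \langle u, y\rangle$ for all $u \in L_{1}$, and since the element of $V_{1}$ implementing a prescribed $\Z[q,q^{-1}]$-valued pairing with $L_{1}$ is unique, $\pi_{1}(x) = y$. The surjection $\pi_{2}(L) = \ld L_{2}$ is proved by the symmetric argument, using freeness of $L/L_{2}$ and the equivalent form $\ld L = L$ of unimodularity (noted after Proposition~\ref{prop:GramChangeBasis}).

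For (2), the isomorphism $L/L_{2} \cong L_{1}^{\vee}$ produced by $\pi_{1}$ carries $L_{1}$ (which embeds into $L/L_{2}$ because $L_{1} \cap L_{2} \subseteq V_{1} \cap V_{2} = 0$) to $L_{1} \subseteq L_{1}^{\vee}$, since $\pi_{1}$ fixes $L_{1}$. Quotienting by $L_{1}$ gives $L_{1}^{\vee}/L_{1} \cong L/(L_{1} \oplus L_{2})$, and the parallel argument via $\pi_{2}$ yields $\ld L_{2}/L_{2} \cong L/(L_{1} \oplus L_{2})$. For (3), Proposition~\ref{prop:GramChangeBasis} presents the inclusions $L_{1} \hookrightarrow L_{1}^{\vee}$ and $L_{2} \hookrightarrow \ld L_{2}$ by the Gram matrix $A_{1}$ and the conjugate Gram matrix $\overline{A_{2}}$ respectively, so $L_{1}^{\vee}/L_{1}$ and $\ld L_{2}/L_{2}$ are cokernels of these matrices and have $0$-th Fitting ideals $(\det L_{1})$ and $(\overline{\det L_{2}})$. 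The isomorphism from (2) forces these ideals to coincide, whence the determinants of $L_{1}$ and $L_{2}$ agree up to a unit (and the conjugation involution).

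The hardest step will be the surjectivity part of (1): combining freeness of $L/L_{i}$ (to extend the functional $\varphi$ from $L_{i}$ to all of $L$) with unimodularity of $L$ (to represent the extension as an element of $L$) and finally using non-degeneracy of the form on $V_{i}$ to identify the projection of that element with the originally chosen $y$. Setting up the vector-space decomposition $V = V_{1} \oplus V_{2}$ cleanly from the two one-sided orthogonality hypotheses is the other subtle point, as the non-symmetry of the sesquilinear form prevents a direct transport of the classical argument.
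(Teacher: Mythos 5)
Your argument is sound in substance and takes a genuinely different route from the paper at two points. For the surjectivity of $\pi_{1}$ onto $L_{1}^{\vee}$ in (1), the paper works with bases: it extends a basis $\mathcal B_{1}$ of $L_{1}$ to a basis $\mathcal B$ of $L$ (freeness of $L/L_{1}$), uses unimodularity to see that the right dual basis $\mathcal B^{\vee}$ again lies in $L$, and checks that $\{\pi_{1}(b_{j}^{\vee})\}_{j\le r}$ is a right dual basis to $\mathcal B_{1}$. Your functional-extension argument uses the same two hypotheses (splitting of $L_{1}$ off $L$, and $L=L^{\vee}$) but packages them more conceptually; it is correct modulo the bookkeeping that $v\mapsto \langle v,x\rangle$ is $q$-antilinear rather than linear. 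Part (2) is essentially the paper's argument, rephrased via the first isomorphism theorem. For (3) the paper instead constructs two dilations $D_{1},D_{2}:L\to L_{1}\oplus L_{2}$, observes that $D_{1}^{-1}\circ D_{2}$ is a lattice automorphism of $L$ and hence has unit determinant, and computes $\det D_{i}$ from block-triangular matrices via Proposition~\ref{prop:GramChangeBasis}. Your Fitting-ideal argument is shorter and, to my mind, preferable, because it makes explicit something the paper's closing ``similarly'' hides: the presentation matrix of $\ld L_{2}/L_{2}$ is the \emph{conjugate} $\overline{A}_{2}$, so what one actually obtains is that $(\det L_{1})=(\overline{\det L_{2}})$ as ideals of $\Z[q,q^{-1}]$. (A careful reading of the paper's own computation of $\det D_{2}$ yields the same conjugate.) Be aware that this is weaker than statement (3) as literally written: $f$ and $\overline{f}$ need not be associates in $\Z[q,q^{-1}]$, so to deduce that $\det L_{1}$ and $\det L_{2}$ agree up to units one would need the extra input that $\det L_{2}$ is, up to units, invariant under $q\mapsto q^{-1}$. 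You should state explicitly that your conclusion is the conjugated version.

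One shared gap worth closing: the decomposition $V=V_{1}\oplus V_{2}$ (equivalently, the uniqueness claim in (1)) is not a formal consequence of non-degeneracy of the form on $V$ together with the two one-sided orthogonality hypotheses. For a non-symmetric form one can have $V_{1}^{\perp}=V_{2}$ and ${}^{\perp}V_{2}=V_{1}$ with $V_{1}\cap V_{2}\neq 0$ (for instance an alternating form on $\Q(q)^{2}$ with $V_{1}=V_{2}$ an isotropic line, which even arises from a unimodular $\Z[q,q^{-1}]$-form with free quotients). What is needed is that $V_{1}\cap V_{1}^{\perp}=0$, i.e.\ that the form restricted to $V_{1}$ is non-degenerate. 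The paper assumes this decomposition just as silently as you do, so it is not a defect of your approach relative to the paper's; but since you single this step out as a subtle point, you should supply an actual argument (e.g.\ using the involution $d$ and the behaviour of the form at $q=1$ in the intended examples) rather than attributing it to non-degeneracy of the ambient form alone.
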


\begin{proof}
We prove the first statement for the projection $\pi_{1}$. Since $L/L_{1}$ is free, every basis $\mathcal B_{1}=\{b_{1},...,b_{r}\}$ of $L_{1}$ can be extended to a basis $\mathcal B=\{b_{1},...,b_{r}, c_{r+1},...,c_{n}\}$ of $L$. 

Since $L$ is unimodular, the right dual basis $\mathcal \mathcal B^{\vee}=\{ b_{1}^{\vee},...,b_{r}^{\vee}, c_{r+1}^{\vee},...,c_{n}^{\vee}\}$ is also a basis for $L$. Note that for $j > r$, $c_{j}^{\vee} \in (L_{1}\otimes_{\Z[q,q^{-1}]} \Q(q))^{\perp}=L_{2}\otimes_{\Z[q,q^{-1}]}  \Q(q)$. Hence, $\pi_{1}(c_{j}^{\vee})=0$, and therefore $\{\pi_{1}(b_{j}^{\vee}): j\leq r\} $ spans the image of $\pi_{1}$. 

For $j\leq r$, $b_{j}^{\vee}=\pi_{1}(b_{j}^{\vee})+\pi_{2}(b_{j}^{\vee})$. 
Then, for any $k\leq j$, $\langle  b_{k}, \pi_{2}(b_{j}^{\vee})\rangle =0$, since $\pi_{2}(b_{j}^{\vee})\in (L_{1}\otimes \Q(q))^{\perp}$. So  $\langle b_{k}, \pi_{1}(b_{j}^{\vee}) \rangle= \langle b_{k}, b_{j}^{\vee} \rangle= \delta_{jk}$, so $\{\pi_{1}(b_{j}^{\vee}): j\leq r\}$ is a right dual basis to $\mathcal B_{1}$. This completes the proof of statement (1) for $\pi_{1}$, and the case of $\pi_{2}$ is similar.

For the second statement, we will prove that $L_{1}^{\vee}/L_{1} \cong L/ (L_{1}\oplus L_{2}) \cong \ld L_{2}/L_{2}$. Consider the map of $\Z[q,q^{-1}]$-modules $\overline{\pi_{1}}: L/(L_{1}\oplus L_{2}) \to L_{1}^{\vee}/L_{1}$, given by $[x]\mapsto [\pi_{1}(x)]$, where $x \in L$, and square brackets denote cosets.
The map $\overline{\pi_{1}}$ is well defined since $\pi_{1}(L_{1}\oplus L_{2})=L_{1}$. 

It is also injective: if $[\pi_{1}(x)]=0$, then $\pi_{1}(x)\in L_{1}$.
Now, for all $x\in L$, $x = \pi_1(x) + \pi_2(x)$, with $\pi_2(x)\in \ld L_2$.  But $L_{2}$ is the right orthogonal complement of $L_{1}$ in $L$, so if $x \in L$ and $\pi_1(x)\in L_1$, then $\pi_2(x)\in L_2$.  Thus $[\pi_1(x)]=0$ implies $x \in L_{1}\oplus L_{2}$, hence $\overline{\pi_1}$ is injective. In addition, $\overline{\pi_{1}}$ is surjective because the image of $\pi_{1}$ is $L_{1}^{\vee}$. This completes the proof of the first isomorphism, and the second is similar.

For statement (3), choose bases $\mathcal B_{1}=\{b_{1},...,b_{r}\}$ for $L_{1}$ and $\mathcal B_{2}= \{b_{r+1},...,b_{n}\}$ for $L_{2}$. By part (1), the image of $\pi_{1}$ is $L_{1}^{\vee}$, and the kernel of $\pi_{1}$ is $L_{2}$. Furthermore, by assumption $L/L_{2}$ is free. So, given the dual basis $\mathcal B_{1}^{\vee}=\{b_{1}^{\vee},..., b_{r}^{\vee}\}$, we can obtain a basis of $L$ by choosing arbitrary vectors $\pi_{1}^{-1}(b_{j}^{\vee})$ in the preimage of $b_j^\vee$ and taking the union 
$\pi_{1}^{-1}(\mathcal B_{1}^{\vee})\cup \mathcal B_{2}$, where $\pi_{1}^{-1}(\mathcal B_{1}^{\vee})=\{\pi_{1}^{-1}(b_{j}^{\vee}): j\leq r\}$.
Similarly, after choosing vectors $\pi_{2}^{-1}(\ld b_{k})$, we have that $\mathcal B_{1}\cup \pi_{2}^{-1}(\ld \mathcal B_{2})$ also forms a basis of $L$, where 
$\pi_{2}^{-1}(\ld \mathcal B_{2})=\{\pi_{2}^{-1}(\ld b_{k}): r<k\leq n\}$.

Consider the $q$-lattice dilations $D_{1}$ and $D_{2}$,
\[
\begin{tikzcd}
L \arrow{r}{D_{1}} & L_{1}\oplus L_{2} 
\\
&L\arrow{u}{D_{2}} 
\end{tikzcd}
\]
where $D_{1}$ is defined by taking the ordered basis vectors $\pi_{1}^{-1}(\mathcal B_{1}^{\vee})\cup \mathcal B_{2}$ to the ordered basis $\mathcal B_{1} \cup \mathcal B_{2}$ ; similarly $D_{2}$ maps $\mathcal B_{1}\cup \pi_{2}^{-1}(\ld \mathcal B_{2})$ to $\mathcal B_{1} \cup \mathcal B_{2}$.
After extending scalars to $\Q(q)$, note that now $L\otimes \Q(q)=L_{1}\otimes \Q(q) \oplus L_{2}\otimes \Q(q)$, and $D_{1}\otimes \id$ and $D_{2}\otimes \id$ are vector space automorphisms. (We abuse notation and write $D_{1}\otimes \id$ and $D_{2}\otimes \id$ as simply $D_{1}$ and $D_{2}$):
\[
\begin{tikzcd}
L\otimes_{\Z[q,q^{-1}]}  \Q(q) \arrow{r}{D_{1}} & (L_{1}\otimes_{\Z[q,q^{-1}]}  \Q(q)) \oplus (L_{2}\otimes_{\Z[q,q^{-1}]}  \Q(q) )
\\
&L\otimes_{\Z[q,q^{-1}]}  \Q(q) \arrow{u}{D_{2}} \arrow{lu}{D_{1}^{-1}\circ D_{2} }
\end{tikzcd}
\]
In particular, by composing $D_{2}$ with  $D_{1}^{-1}$ we obtain an automorphism of $L\otimes \Q(q)$, which restricts to a lattice automorphism of $L$. The determinant of a $q$-lattice automorphism is a unit in $\Z[q,q^{-1}]$, that is, the determinant of $D_1^{-1}\circ D_2$ is equal to $\pm q^{k}$ for some integer $k$.
Therefore, the determinants of $D_{1}$ and $D_{2}$ agree up to units.

What remains is to compute these determinants. We accomplish this by writing $D_{1}$, as a matrix in the basis 
$\mathcal B_{1}^{\vee}\cup \mathcal B_{2}= \{b_{1}^{\vee},...,b_{r}^{\vee},b_{r+1},...,b_{n}\}$. We know that $D_{1}$ is the identity when restricted to $L_{2}\otimes_{\Z[q,q^{-1}]} \Q(q)$. Furthermore, for $1\leq i \leq r$, we have $b_{i}^{\vee}=\pi_{1}^{-1}(b_{i}^{\vee})+\beta_{i}$, for some $\beta_{i} \in L_{2}\otimes_{\Z[q,q^{-1}]}  \Q(q)$, and so $D_{1}(b_{i}^{\vee})=b_{i}+\beta_{i}$. 

Proposition~\ref{prop:GramChangeBasis} implies that the matrix of the linear map $$L_{1}\otimes_{\Z[q,q^{-1}]}  \Q(q) \to L_{1}\otimes_{\Z[q,q^{-1}]}  \Q(q)$$
 $$b_{i}^{\vee} \mapsto b_{i},$$ written in the basis $\mathcal B_{1}^{\vee}$, is the Gram matrix 
$A_{L_{1},\mathcal B_{1} }$ of $L_{1}$.  So the matrix of $D_{1}$ is a block diagonal matrix
$
\left[
\begin{array}{c|c}
A _{L_{1},\mathcal B_{1}} & \quad 0\quad \\ \hline
 * & \quad\id\quad
\end{array}\right].
$

Therefore, the determinant of $D_{1}$ is equal to the determinant of $L_{1}$. Similarly, the determinant of $D_{2}$ is equal to the determinant of $L_{2}$, completing the proof.

\end{proof}

\begin{remark}
We note here that the proof of the above theorem did not use the existence of the involution $d:L\longrightarrow L$ deforming the identity.  
Note also that in the classical Theorem~\ref{thm:ClassGlue}, statement (3) is almost immediate from (2): the glue groups are finite abelian groups, and the determinant is the size of the glue group by a geometric counting argument. In Theorem~\ref{thm:qGlue}, the statements remain true but the proof is more involved, as the left and right glue groups are torsion $\Z[q,q^{-1}]$-modules. 

In Theorem~\ref{thm:ClassGlue} the mutual orthogonal complement condition implies that the abelian group $L/L_{i}$ is torsion free for $i=1,2$, which in turn implies that these abelian groups are free. In Theorem~\ref{thm:qGlue}, the mutual orthogonal complement assumption implies that the $\Z[q,q^{-1}]$-modules $L/L_{i}$ are torsion free, but it does not follow formally from this that they are free modules.  This is why we assume they are free in the statement of Theorem~\ref{thm:qGlue} but not in Theorem~\ref{thm:ClassGlue}.
\end{remark}

\section{The $q$-lattices of integer cuts and flows associated to a spanning tree}\label{sec:CutFlow}
\subsection{Classical cut and flow lattices}
Let $\Gamma$ be a finite graph, with loop edges and multiple edges allowed. 
For simplicity, we assume that $\Gamma$ is 2-edge-connected, that is, $\Gamma$ is connected and remains connected 
after the removal of any one edge. In this paper we'll abbreviate this and say that $\Gamma$ is 2-connected. 

Let $V=V(\Gamma)$ and $E=E(\Gamma)$ denote the vertex set and the edge set of $\Gamma$, respectively. 
Choose an arbitrary orientation $\omega$ for $\Gamma$ (that is, a direction for each edge): this makes $\Gamma$ into a 1-dimensional CW-complex with
cellular chain complex
$$0 \to C_1(\Gamma, \Q) \stackrel{\partial}{\longrightarrow} C_0(\Gamma, \Q) \to 0,$$
where for an edge $e$ that begins at vertex $v$ and ends at $w$, $\partial(e)=w-v$. We equip $C_1(\Gamma, \Q)$ and  $C_0(\Gamma, \Q)$
with inner products $\langle \cdot, \cdot \rangle$ by declaring $E$ and $V$ to be orthonormal bases. Then the adjoint of the boundary map, $\partial^*: C_0(\Gamma, \Q) \to C_1(\Gamma, \Q)$
can be expressed by the formula $\partial^*(v)=\sum_{e \in E} \langle \partial(e),v \rangle  e$. 

If $|E|=n$, then $C_1(\Gamma,\Q)\cong\Q^n$ has an integer lattice $C_1(\Gamma,\Z)\cong\Z^n$ inside it. In $C_1(\Gamma,\Z)$, the sublattices 
$\{\im(\partial^*)\cap C_1(\Gamma, \Z)\}$ and  $\{\ker(\partial) \cap C_1(\Gamma, \Z)\}$ are mutual orthogonal complements. 
They are called the \emph{lattice of integer cuts} (the \emph{cut lattice}) and the \emph{lattice of integer flows} (the \emph{flow lattice}),
respectively, and each inherits the inner product restricted from $C_1(\Gamma, \Z)\cong\Z^n$. We denote the cut lattice and the flow lattice by $\calC(\Gamma)$ and $\calF(\Gamma)$, respectively. We denote the inner products by $\langle \cdot , \cdot \rangle_{{\calC(\Gamma)}}$ and $\langle \cdot , \cdot \rangle_{{\calF(\Gamma)}}$ respectively, dropping the subscripts whenever it does not cause confusion.

Note that even though the definition of $\calC(\Gamma)$ and $\calF(\Gamma)$ depends on the choice of an orientation, changing the orientation does not
change the isomorphism type of the lattices. Let $\omega'$ be another orientation of $\G$ that differs from $\omega$ by switching the direction of a single edge $e$, and let $\calC'(\Gamma)$ and $\calF'(\Gamma)$ be the cut- and flow lattices corresponding to the orientation $\omega'$. The isomorphism 
$C_{1}(\Gamma) \xrightarrow{\cong} C_{1}(\G)$,
which sends $e$ to $-e$, and all other edges to themselves induces isomorphisms $\calC(\G)\cong \calC'(\G)$ and $\calF(\G)\cong \calF'(\G)$.

In combinatorial terms, the cut lattice is generated by the \emph{cuts} of $\G$, as follows. Given a partition $V=V_0\cup V_1$, the corresponding {\em cut}
is the signed sum of the edges connecting $V_0$ to $V_1$, where each edge oriented from $V_{0}$ towards $V_{1}$ participates with positive sign, and edges oriented from $V_{1}$ towards $V_{0}$ participate with negative sign. 

The flow lattice, in turn, is generated by 
oriented \emph{cycles} (that is, closed walks) in $\Gamma$. Each oriented cycle gives rise to an element of $\calF(\G)$: a signed sum of the edges of the cycle, in which edges  participate with positive sign if their orientation agrees with the orientation of the cycle, and with negative sign otherwise. 

This combinatorial description leads to a (well-known) construction of bases for $\calC(\G)$ and $\calF(\G)$.
Fix a spanning tree $T$ of $\G$. Removing an edge $e \in T$ splits $T$ into two connected components. 
This defines a vertex partition $V=V_{0,e} \cup V_{1,e}$, where $e$ is directed from $V_{0,e}$
towards $V_{1,e}$. Denote the corresponding cut by $$K_e=\sum_{e'\in E_{V_0\to V_1}} e' -\sum_{e''\in E_{V_1\to V_0}} e''.$$ 
This is called the {\em fundamental cut} corresponding to the edge $e \in T$. Note that the only 
spanning tree edge appearing in $K_e$ is $e$ and it always appears with positive sign. The set of fundamental cuts $\{K_e: e\in T\}$ forms a 
basis for $\calC(\G)$. 

As for the flow lattice, every edge $f \notin T$ creates a single cycle when added to $T$, called the \emph{fundamental cycle} of $f$, and denoted $C_f$.
The corresponding basis element of $\calF(\G)$ is $$C_f=\sum_{f'\in C_f} \pm f'.$$ Here, the signs in the sum are assigned as follows:
the orientation of $f$ in $\omega$ induces
an orientation of the cycle $C_f$, and each edge $f'\in C_f$ appears with a positive sign if its orientation agrees with the orientation of $C_f$,
and with a negative sign otherwise. The set of fundamental cycles $\{C_f: f \in T^c= E\setminus T\}$ forms a basis of $\calF(\G)$.

\subsection{Bipartite algebras from graphs}\label{subsec:AlgebrasFromGraphs}
Let $\Gamma$ be a 2-connected graph with numbered edge set $E=\{1,2,...,n\}$ and orientation $\omega$, and fix a spanning tree $T\subset E$. 
Define a bipartite graph $B_{\Gamma,T}$ whose {\em vertex set} is the edge set of $\G$, with the bipartition
$$
	V_0(B_{\Gamma,T}) = T,  \ \ V_1(B_{\Gamma,T}) = E\setminus T.
$$
The edges of $B_{\Gamma,T}$ are defined as follows. 
Connect the vertex $j \in V_{1}(B_{{\Gamma,T}})$ to exactly those vertices ${i}\in V_{0}(B_{\Gamma,T})$ which occur in the 
fundamental cycle $C_{j}$ of $j$, as shown on an example in Figure \ref{fig:BipartiteGraph}.

\begin{figure}
 \input 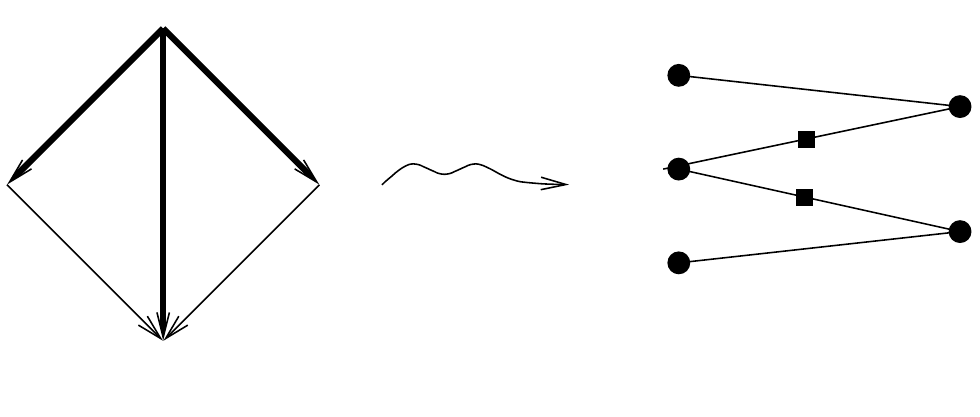_t
 \caption{The bipartite graph $B_{\G,T}$ associated to a graph. Spanning tree edges of $\G$ are drawn bold, negative edges of $B_{\G,T}$
 are marked with a square.}
 \label{fig:BipartiteGraph}
\end{figure}

Next, we decorate the edges of $B_{\G,T}$ with signs arising from the orientation $\omega$.
Consider an edge $f$ in the bipartite graph $B_{\Gamma,T}$.  This edge corresponds to a pair of edges $(i,j)$ in $\Gamma$, 
with $i\in T$, ${j}\notin T$. The edge $i$ participates in $C_{j}$ with a positive or negative sign:
this is the edge sign associated to $f$. In Figure \ref{fig:BipartiteGraph} we denote negative edges of $B_{\Gamma,T}$ by a small
square placed on the edge.

Note that the bases for $\calC(\G)$ and $\calF(\G)$ associated to the spanning tree $T$ can be read
off $B_{\G,T}$. It is a short combinatorial exercise to show that the edge $i$ appears in the fundamental cycle $C_j$, if and only if $j$
appears in the fundamental cut $K_i$. Furthermore, their signs are always opposite: $i$ appears in $C_j$ with positive sign if and only if $j$ appears in $K_i$
with negative sign. 

While changing the orientation $\omega$ of $\G$ does not change the isomorphism types of $\calC(\G)$ and $\calF(\G)$, it has an effect on the edge signs of $B_{\G,T}$. Namely, changing the orientation of an edge $i$ of $\G$ flips
the sign of all the edges of $B_{\G,T}$ incident to the vertex $i$, but it does not otherwise change the bipartite graph. 

Set $A_{\Gamma,T} := A(B_{\Gamma,T})$, as defined in Section~\ref{sec:Algebra}.
The edge signs of $B(\G,T)$ turn the double quiver $Q(B(\G,T))$ into a signed quiver (both directions of an edge inherit the same sign).
This induces a $\Z_2$-grading on the path algebra: the $\Z_{2}$-degree of a path is the parity of the number of negative edges in it. Since the relations are homogeneous, there is an induced $\Z_{2}$-grading on $A_{\G,T}$, and on finitely generated $A_{\G,T}$-modules.  The Groethendieck group $K_0(A_{\G,T}\text{-mod})$ is now a free module over the ring $\Z[q,q^{-1},t]/(t^2=1)$, and
the graded Euler form is valued in $\Z[q,q^{-1},t]/(t^2=1)$, and is $t$-bilinear and $q$-sesquilinear. The substitutions $t=1$ and $t=-1$ both turn the Grothendieck 
group into a $q$-lattice. From here on we will also refer to the $\Z_{2}$-grading as the $t$-grading.

The statements of
Section~\ref{sec:Algebra} hold with $t=1$, and otherwise are easily modified to include the $t$-grading. The simple modules $L_{i}$ are contained in $t$-degree 0. The indecomposable projective modules  $P_{i}$ are naturally $t$-graded, as their elements can be viewed as paths. Homomorphisms between indecomposable projective modules are spanned by paths as in Proposition~\ref{prop:HomP}, and $t$-graded accordingly. In 
Proposition~\ref{prop:ProjRes}, projective resolutions can be adjusted to account for the $t$-grading by inserting $t$-shifts whenever the edge ``leading to'' an indecomposable projective module has 
a negative sign. For instance, in the example of Figure~\ref{fig:BipartiteGraph}, the projective resolution of $L_4$ is $P_2\{1\}\langle1\rangle \oplus P_1\{1\} \to P_4$, where $\langle \cdot \rangle$ denotes the $t$-grading shift. The functor $d$ respects the $t$-grading, and descends to a $t$-linear, $q$-anti-linear map on the Grothendieck group.

\begin{proposition}\label{prop:FlowCategorification} For edges $i, j \notin T$ the pairing
$
	\langle [P_{i}], [P_{j}] \rangle|_{q=1,t=-1}
$
agrees with the pairing $\langle C_{i},C_{j}\rangle_{\calF(\Gamma)}$ of the corresponding fundamental cycles in the flow lattice of $\Gamma$.
\end{proposition}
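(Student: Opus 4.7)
The plan is to compute $\langle [P_i], [P_j] \rangle$ directly using the Euler form, and then match the result term-by-term with the combinatorial formula for $\langle C_i, C_j \rangle_{\calF(\Gamma)}$.

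First, since $P_{i}$ is projective, $\Ext^{k}(P_{i}, P_{j}) = 0$ for $k > 0$, so the graded Euler form reduces to $\langle [P_{i}], [P_{j}] \rangle = \qdim \Hom(P_{i}, P_{j})$, with the $t$-grading refining this as a polynomial in $\Z[q, q^{-1}, t]/(t^{2} = 1)$. By the $t$-graded version of Proposition~\ref{prop:HomP}, $\Hom(P_{i}, P_{j})$ has a basis consisting of oriented paths in $Q(B_{\Gamma, T})$ from $i$ to $j$, each contributing its $q$-degree (path length) and $t$-degree (parity of the number of negative edges it traverses).

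Next, I would enumerate these paths. Since $i, j \in V_{1}(B_{\Gamma, T}) = E \setminus T$, any nonzero path from $i$ to $j$ in $A_{\Gamma, T}$ has even length, and length $\geq 4$ is zero in the algebra. So only length $0$ (possible iff $i = j$) and length $2$ contribute. A length-$2$ path from $i$ to $j$ has the form $i \to k \to j$ with $k \in V_{0} = T$ adjacent to both $i$ and $j$ in $B_{\Gamma, T}$, i.e., $k \in T \cap C_{i} \cap C_{j}$ (using the combinatorial description of $B_{\Gamma, T}$). The $t$-degree of such a path is the parity of the sum of the two edge signs, so its contribution to $\langle [P_{i}], [P_{j}] \rangle$ at $q = 1, t = -1$ equals the product $\epsilon(k, C_{i}) \cdot \epsilon(k, C_{j})$, where $\epsilon(k, C_{\ell})$ denotes the sign of the tree edge $k$ in the fundamental cycle $C_{\ell}$.

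Finally, I would match this against the cellular formula for the flow pairing. For an oriented cycle $C_{\ell}$, the basis element $C_{\ell} \in \calF(\Gamma)$ is $\ell + \sum_{k \in T \cap C_{\ell}} \epsilon(k, C_{\ell}) \cdot k$ with $\ell$ occurring with coefficient $+1$. Since $i, j \notin T$ and $i \neq j$ forces $i \notin C_{j}$ and $j \notin C_{i}$ (fundamental cycles contain only tree edges plus the defining non-tree edge), the inner product in $C_{1}(\Gamma, \Z)$ collapses to $\langle C_{i}, C_{j} \rangle_{\calF(\Gamma)} = \sum_{k \in T \cap C_{i} \cap C_{j}} \epsilon(k, C_{i}) \cdot \epsilon(k, C_{j})$ when $i \neq j$, precisely matching the computation above. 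When $i = j$, the length-$0$ path $e_{i}$ contributes $1$, accounting for the extra $+1$ coming from the edge $i$ itself in $\langle C_{i}, C_{i} \rangle = |C_{i}|$; each length-$2$ loop $i \to k \to i$ traverses the same bipartite edge twice, so its two signs are equal and its contribution is $+1$, matching the contribution of the tree edge $k$ to $|C_{i}|$.

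The only real subtlety is keeping the sign conventions straight: one must verify that the sign of the bipartite edge between $i \in V_{1}$ and $k \in V_{0}$ coincides with $\epsilon(k, C_{i})$ (which is how the edge signs were defined in Section~\ref{subsec:AlgebrasFromGraphs}), and that this sign is independent of the direction in which the edge is traversed in $Q(B_{\Gamma, T})$. Once this is established, the bookkeeping in both the $i = j$ and $i \neq j$ cases is routine, and the proposition follows.
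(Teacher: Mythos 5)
Your proof is correct and follows essentially the same route as the paper's: reduce the Euler form to $\qdim\Hom(P_i,P_j)$ via projectivity, enumerate the length-$0$ and length-$2$ paths through vertices $k\in T\cap C_i\cap C_j$ using Proposition~\ref{prop:HomP}, and match the $t=-1$ sign contributions against the combinatorial flow pairing. Your treatment is in fact somewhat more careful than the paper's (which handles the $i=j$ case and the sign bookkeeping only in passing), but the underlying argument is identical.
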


\begin{proof}
Since $P_{i}$ and $P_{j}$ are projective modules, the $\Ext$  in the definition of the graded Euler pairing reduces to $\Hom$. So we need to show that when $i,j\notin T$, $$qt\!\dim \Hom(P_{i},P_{j})|_{q=1,t=-1}= \langle C_{i},C_{j}\rangle_{\calF(\Gamma)}.$$  

As shown in Proposition~\ref{prop:HomP}, a basis for $\Hom(P_{i},P_{j})$ is given by paths from $e_{i}$ to $e_{j}$ in $B_{\Gamma,T}$ all of which are of length 2, except for the length zero path $e_{i}$ in the case $i=1$. The length 2 paths, in turn, correspond to $T$-edges in common between $C_{i}$ and $C_{j}$. The substitution $t=-1$ gives these $T$-edges signs according to whether an edge participates in $C_{i}$ and $C_{j}$ with the same sign or opposite signs. This is exactly how the pairing $\langle C_{i},C_{j}\rangle_{\calF(\Gamma)}$ is defined.
\end{proof}

\begin{remark}\label{rmk:FlowInjectives}
In light of Example~\ref{ex:AllMatrices}, we may substitute indecomposable injective modules for the
indecomposable projective modules in the statement of Proposition~\ref{prop:FlowCategorification}. 
Specifically, for any signed bipartite graph $B$ and modules $M, N \in A(B)\text{-mod}$, in $K_{0}(A(B)\text{-mod})$ we have 
$\langle[M],[N] \rangle(q,t)=\langle [dN], [dM] \rangle(q^{{-1}},t)$. Hence,  for $e_{i}, e_{j}\notin T$, and indecomposable injective $A_{\G,T}$-modules  
$I_{i}$ and $I_{j}$, 
$\langle [I_{i}], [I_{j}] \rangle|_{q=1,t=-1}=\langle C_{i},C_{j}\rangle_{\calF(\Gamma)}$.
\end{remark}

The following is a dual statement to Proposition~\ref{prop:FlowCategorification}, which we will prove in Section~\ref{subsec:Duality}:

\begin{proposition}\label{prop:CutCategorification}
For edges $i, j \in T$ the pairing
$
	\langle [L_{i}], [L_{j}] \rangle |_{q=1,t=-1}
$
agrees with the pairing $\langle K_{i},K_{j}\rangle_{\calC(\Gamma)}$ of the corresponding fundamental cuts in the cut lattice of $\Gamma$.

\end{proposition}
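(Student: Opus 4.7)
My plan is to prove Proposition~\ref{prop:CutCategorification} by direct computation on both sides, matching them term by term. The tree-side input is the linear projective resolution of $L_i$, while the cut-side input is the combinatorial correspondence between fundamental cuts and fundamental cycles (with opposite signs) already established just before Proposition~\ref{prop:FlowCategorification}.

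For $i\in T=V_0(B_{\G,T})$, Proposition~\ref{prop:ProjRes} gives a length-two linear projective resolution
$$\bigoplus_{k\in N_i}\bigoplus_{\ell\in N_k}P_\ell\{2\}\langle\sigma(i,k)+\sigma(k,\ell)\rangle \;\longrightarrow\; \bigoplus_{k\in N_i}P_k\{1\}\langle\sigma(i,k)\rangle\;\longrightarrow\; P_i\;\longrightarrow\; L_i,$$
where $\sigma(a,b)\in\{0,1\}$ records the sign $\epsilon_{ab}=(-1)^{\sigma(a,b)}$ of the edge $ab$ of $B_{\G,T}$. Passing to $K_0$ gives an expression for $[L_i]$ as an alternating sum of classes of projectives with appropriate $q$- and $t$-shifts.

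Next, I will apply the graded Euler pairing against $[L_j]$ for $j\in T$, using sesquilinearity together with the left duality $\langle[P_a],[L_b]\rangle=\delta_{ab}$ from Example~\ref{ex:LRDualBases}. The middle term $\bigoplus_{k\in N_i}P_k\{1\}$ contributes nothing, because its summands are indexed by $k\in V_1(B_{\G,T})$ while $j\in V_0(B_{\G,T})$. The last term contributes only from those $\ell=j$, giving
$$\langle[L_i],[L_j]\rangle \;=\; \delta_{ij} \;+\; q^{-2}\!\sum_{k\in N_i:\,j\in N_k}\! t^{\sigma(i,k)+\sigma(k,j)}.$$
Specialising at $q=1,\;t=-1$ yields $\delta_{ij}+\sum_{k\in N_i\cap N_j}\epsilon_{ik}\epsilon_{kj}$, where the sum runs over non-tree edges $k$ adjacent in $B_{\G,T}$ to both $i$ and $j$.

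On the other side, I will expand $\langle K_i,K_j\rangle_{\calC(\G)}$ along the orthonormal edge basis of $C_1(\G,\Z)$. The tree-edge contribution is $\delta_{ij}$ because $K_i$ contains the tree edge $i$ with coefficient $+1$ and no other tree edge. For a non-tree edge $k$, the coefficient of $k$ in $K_i$ is nonzero precisely when $i\in C_k$, i.e.\ when $k$ is adjacent to $i$ in $B_{\G,T}$. Using the opposite-sign correspondence recalled in Section~\ref{subsec:AlgebrasFromGraphs} (``$i$ appears in $C_k$ with sign $\epsilon_{ik}$ iff $k$ appears in $K_i$ with sign $-\epsilon_{ik}$''), the coefficient of $k$ in $K_i$ equals $-\epsilon_{ik}$. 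Hence the non-tree-edge contribution to $\langle K_i,K_j\rangle$ is $\sum_{k\in N_i\cap N_j}(-\epsilon_{ik})(-\epsilon_{jk})=\sum_{k\in N_i\cap N_j}\epsilon_{ik}\epsilon_{jk}$, matching the specialisation above.

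The only real obstacle is the sign bookkeeping: one has to be careful that the $t$-grading shifts inserted into the resolution use exactly the same edge signs as those defining $B_{\G,T}$, and that the opposite-sign rule between cuts and cycles cancels cleanly with the product $\epsilon_{ik}\epsilon_{jk}$. Everything else is a short computation using the resolution and the left-duality pairing.
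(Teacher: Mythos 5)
Your proof is correct, but it takes a genuinely different route from the paper's. You compute both sides directly: the length-two linear projective resolution of $L_i$ (Proposition~\ref{prop:ProjRes}, with the $t$-shifts inserted as described in Section~\ref{subsec:AlgebrasFromGraphs}) gives $[L_i]=[P_i]-q\sum_{k\in N_i}t^{\sigma(i,k)}[P_k]+q^2\sum_{k\in N_i}\sum_{\ell\in N_k}t^{\sigma(i,k)+\sigma(k,\ell)}[P_\ell]$, and pairing against $[L_j]$ via $\langle[P_a],[L_b]\rangle=\delta_{ab}$ and $q$-sesquilinearity yields $\delta_{ij}+q^{-2}\sum_{k\in N_i\cap N_j}t^{\sigma(i,k)+\sigma(k,j)}$, whose specialisation matches the expansion of $\langle K_i,K_j\rangle_{\calC(\G)}$ in the orthonormal edge basis once the opposite-sign rule between fundamental cuts and cycles is used (the two sign reversals cancel in the product). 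I checked the bookkeeping and it is consistent with the paper's conventions (compare the value $\langle[L_1],[P_1]\rangle=q^{-2}$ in Example~\ref{ex:LRDualBases}). The paper instead deduces Proposition~\ref{prop:CutCategorification} from Proposition~\ref{prop:FlowCategorification} by combining the Koszul duality equivalence (which sends $L_i$ to $P_i^!$ and transforms the pairing by $q\mapsto -q^{-1}$) with planar-graph/bipartite-graph duality (which identifies $\calC(\G)$ with $\calF(\G^!)$ and $K_i$ with $C_i^!$). The duality argument is conceptually tidier and reinforces the Koszul-duality/matroid-duality dictionary that is a main theme of the paper, but it relies on the derived equivalence of \cite{BGS} and some care with the substitution $q\mapsto -q^{-1}$; your computation is more elementary and self-contained, and as a bonus it produces the full graded pairing $\langle[L_i],[L_j]\rangle$ before specialising at $q=1$, $t=-1$, which is slightly more information than the statement requires.
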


\subsection{Koszul duality and matroid duality}\label{subsec:Duality} Rather than proving Proposition~\ref{prop:CutCategorification} directly we explain how it is dual to the statement of Proposition~\ref{prop:FlowCategorification}. Proposition~\ref{prop:KoszulDual} implies that if $B^{!}$ denotes the bipartite graph obtained from $B$ by switching the bipartition, then $A(B^{!})\cong A^{!}(B)$, where $A^{!}(B)$ is the Koszul dual of $A(B)$. 

If $B$ is a signed bipartite graph, we set the convention that $B^!$ is the bipartite graph obtained from $B$ by switching the bipartition and {\em all edge signs}. This induces a $t$-grading on $A(B^{!})\cong A^{!}(B)$.

\subsubsection{Koszul duality and Grothendieck groups}

By Theorem 1.2.6 of \cite{BGS}, there is a Koszul duality derived equivalence between {\em derived categories of graded modules}
$$K: \mathcal D^{b}(A(B)\text{-mod})\to \mathcal D^{b}(A(B^{!})\text{-mod}).$$ If $M$ is a module over $A(B)$, we write $M\in \mathcal D^{b}(A(B)\text{-mod})$ and mean the complex whose homological degree 0 chain module is $M$, and whose other chain modules are zero; and similarly for modules over $A(B^!)$. 

If $M \in A(B)\text{-mod},$ $\{\cdot\}$ denotes the internal grading shift, and $[\cdot]$ denotes the homological grading shift, then $K(M\{n\})=(KM)[-n]\{-n\}$. On the other hand, $K$ commutes with $t$-shifts.

For $i=1,...,n$, $K$ 
sends the simple module $L_{i}$ over $A_{\G,T}$ to the indecomposable projective module $P^{!}_{i}$ over the Koszul dual $A_{\G,T}^!$. 

The Grothendieck group of $\mathcal D^{b}(A(B)\text{-mod})$ coincides with $K_0(A(B)\text{-mod})$ via the graded Euler characteristic, that is, the alternating sum of graded dimensions of chain modules.
Koszul duality induces a $\Z$-linear map
$$K: K_{0}(A(B)\text{-mod}) \to K_{0}(A(B^{!})\text{-mod}),$$ determined by $K[L_{i}]=[P^{!}_{i}]$, $K(q)=-q^{-1}$, and $K(t)=t$. 
Furthermore, we have $$\langle [M], [N] \rangle = \langle K(M), K(N) \rangle|_{q\to -q^{-1}}.$$

\subsubsection{Planar Graph Duality and Matroid Duality}If $\Gamma$ is an oriented planar graph with spanning tree $T$, and $\Gamma^{!}$ its oriented planar dual, then there is a canonical bijection $\varphi: E(\Gamma)\to E(\G^{!})$, and 
$E(\G^{!}) \setminus \varphi(T)$ forms a spanning tree for $\G^{!}$. It is a short exercise to check that the bipartite graph $B_{\G^{!},E(G)\setminus T}$ coincides with $(B_{\G,T})^!$. See Figure~\ref{fig:PlanarDual} for an example.

\begin{figure}
\includegraphics[width=13.5cm]{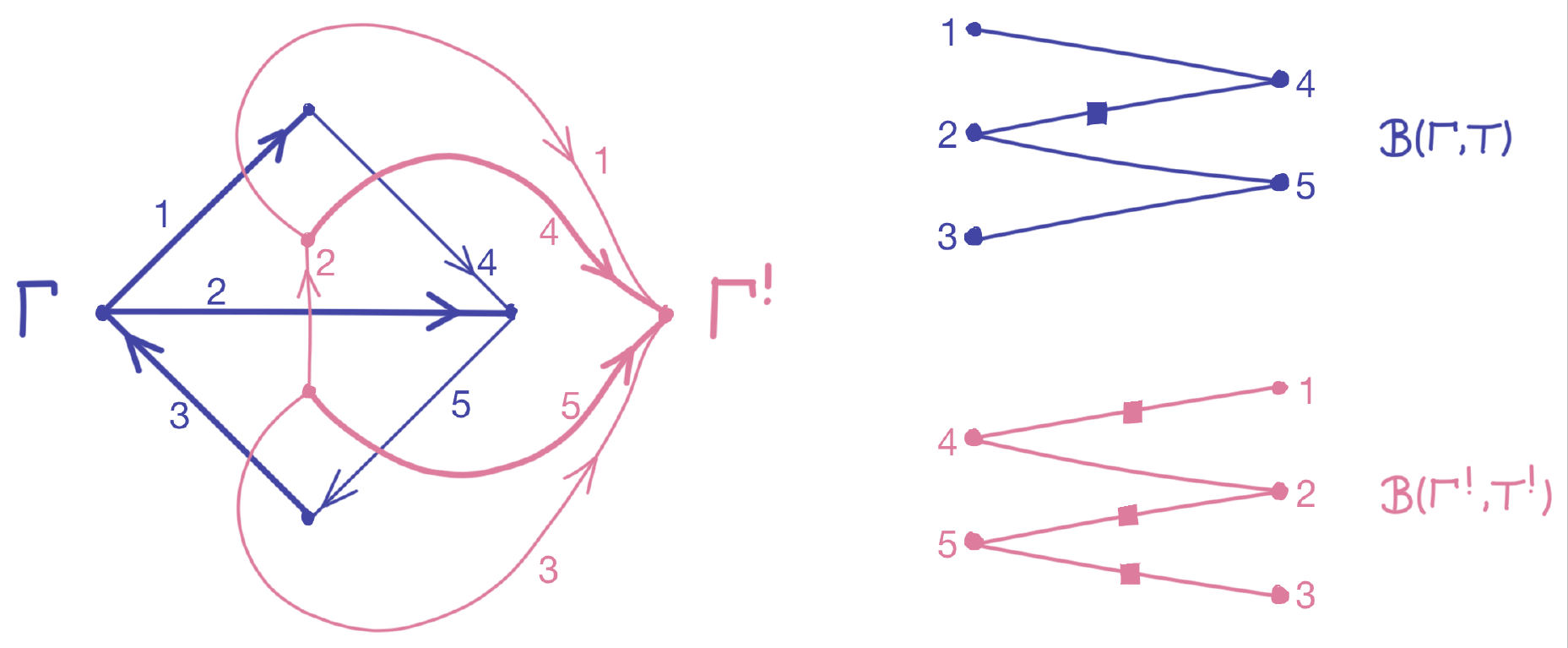}
\caption{On the left, we show an oriented graph $\G$ with spanning tree $T$ shown in thick edges, and its oriented planar dual $\G^{!}$, with spanning tree $T^{!}$ complementary to $T$. The corresponding signed bipartite graphs are shown on the right: $B_{\G^{!},T^{!}}$ is obtained from $B_{\G,T}$ by flipping the bipartition and all edge signs. As an example, the fundamental cycle $C_{2}^{!}$ and the fundamental cut $K_{2}$ are both given by ``$2+4-5$''.}\label{fig:PlanarDual}
\end{figure}

Furthermore, $\calF(\G)\cong \calC(\G^{!})$ canonically, and vice versa. For $i\in T$, $K_{i}=C_{i}^{!}$, where $K_i$ denotes the fundamental cut for $i$ in $\calC(\G)$, and $C_i^!$ denotes the fundamental cycle for $i$ in $\calF(\G^!)$. For $i\notin T$, $C_{i}=K_{i}^{!}$. 

\vspace{2mm}
{\em Proof of Proposition~\ref{prop:CutCategorification} for planar graphs.} From Proposition~\ref{prop:FlowCategorification} for the dual graph $\G^{!}$, 
we have that for $\G$-edges $i,j\in T$, $$\langle [P_{i}^!], [P_{j}^!] \rangle|_{q=1,t=-1}=\langle C_{i}^!,C_{j}^!\rangle_{\calF(\Gamma^!)}.$$
On the left, Koszul duality gives $$\langle [L_{i}], [L_{j}] \rangle|_{q=1,t=-1}=\langle [P_{i}^!], [P_{j}^!] \rangle|_{q=-1,t=-1}=\langle [P_{i}^!], [P_{j}^!] \rangle|_{q=1,t=-1},$$
since pairings between the classes of ``same side'' indecomposable projective modules are functions of $q^2$.

On the right, planar graph duality gives 
$$
	\langle C_{i}^!,C_{j}^!\rangle_{\calF(\Gamma^!)} = \langle K_{i},K_{j}\rangle_{\calC(\Gamma)},$$
and combining the two sides one obtains the statement of Proposition~\ref{prop:CutCategorification}.\qed

\vspace{2mm}

If $\G$ is not planar, there is no dual graph, though there is still a Koszul dual bipartite algebra, and a dual (flipped) bipartite graph. In this case these dualities correspond to the {\em matroid dual} of the oriented graphical matroid associated to $\G$. Thus in fact the most natural language for describing the combinatorial origin for Koszul duality of bipartite algebras is to assign a bipartite graph and bipartite algebra to any {\em oriented regular matroid} with a matroid basis, rather than just to a graph with a spanning tree. 
To simplify language in this paper, we will not present the details of the construction for regular matroids, and instead define the lattices of integer cuts and flows at the bipartite graph level. However, for the reader who is interested in matroid statements, we give a brief summary of the basic points here.

Each oriented regular matroid has a regular {\em oriented matroid dual}. Each oriented graph has an associated oriented {\em graphic matroid}, which is always regular, and for planar graphs the dual matroid of the graphic matroid agrees with the graphic matroid of the planar dual graph.
For non-planar graphs, the dual of the graphic matroid is a regular, non-graphic matroid. 

Regular matroids have associated lattices of integer cuts and flows, and everything in this paper can be extended to that context without difficulty. In particular, the analogue of a spanning tree is a {\em basis} of the matroid, and the analogue of a spanning tree basis for $\calC(\G)$ or $\calF(\G)$ is called the {\em fundamental basis} of the cut or flow lattice {\em associated to a basis of the matroid}. 
The complement of a basis for an oriented regular matroid forms a basis for the dual matroid, and matroid duality exchanges cut and flow lattices. Hence, the planar graph statements above also apply to regular matroids.

\subsubsection{Cuts and Flows for Bipartite Graphs} Given a signed bipartite graph $B$ with numbered vertex set $E=E_{0}\cup E_{1}$, let $\Z^{E}$ denote the Euclidean lattice generated by elements of $E$ (ie, elements of $E$ form an orthonormal basis). The flow lattice $\calF(B)$ of $B$ is the sublattice of $\Z^{E}$ generated by fundamental cycles $C_{i}$ associated to each $i \in E_{1}$ as follows: $$C_{i}=i+\sum_{j\in N_{i}} \epsilon_{ij} j,$$ where $N_{i}$ 
denotes the set of neighbours of $i$ and $\epsilon_{ij}$ is the sign of the edge $i$---$j$ in B. Similarly, the cut lattice
$\calC(B)$ of $B$ is the sublattice of $\Z^{E}$ spanned by fundamental cuts $K_{i}$ for $i\in E_{0}$, where 
$$K_{i}=i-\sum_{j\in N_{i}}\epsilon_{ij} j.$$

It is clear that for a bipartite graph arising from a graph with a spanning tree, $\calF(B_{\G,T})\cong \calF(\G)$ and $\calC(B_{\G,T})\cong \calC(\G)$, and the isomorphisms preserve fundamental cuts and flows. Furthermore, $\calF(B)\cong \calC(B^{!})$
and $\calC(B) \cong \calF(B^{!})$. Proposition~\ref{prop:FlowCategorification} and its proof remains true for flow lattices of signed bipartite graphs.

{\em Proof of Proposition~\ref{prop:CutCategorification}.} The proof for planar graphs generalises directly, replacing $T$ with $E_{0}$, and planar graph duality with bipartite graph duality.\qed

\subsubsection{Bipartite graphs and classes of matroids}
For those interested in matroid aspects, we clarify the relationship between graphs, signed bipartite graphs, and various classes of matroids.

From an oriented graph $\G$ with a choice of spanning tree $T$, we constructed a signed bipartite graph.
From the pair $(\G, T)$, one also obtains an oriented graphic matroid $\mathcal M(G)$ with a chosen basis $\mathcal B$.  
It is possible to construct the signed bipartite graph $B_{\G,T}$ from $\mathcal M(G)$: the vertices correspond to the base set $E$ of the matroid, partitioned into the basis and non-basis elements. Signed edges incident to a non-basis element are drawn according to the fundamental signed circuit in the fundamental basis of $\calF(\mathcal M(G))$ corresponding to $\mathcal B$. In fact, this construction of signed bipartite graphs from matroids works not only for graphic matroids but for any oriented regular matroid with a chosen basis.

Conversely, for any signed bipartite graph $B$, 
we can construct an (oriented) matroid with a chosen basis. The base set of this matroid is the vertex set $E$ of $B$, the basis is the $E_{1}$ side of the vertex partition, and circuits are generated by the fundamental circuits corresponding to elements of $E_{1}$. This matroid is always $\Q$-representable, but not necessarily regular. For example, the complete bipartite graph on $2+2$ vertices, with one negative edge sign, gives rise to a matroid not representable over $\mathbb F_{2}$.

In general, a $\Q$-representable matroid with a chosen basis would give rise to a bipartite graph with $\Z$-weighted edges as opposed to a signed bipartite graph. So signed bipartite graphs can be seen as a class of matroids between regular matroids and $\Q$-representable matroids.

\subsection{The $q$-cut and $q$-flow lattices}\label{subsec:qCutFlow}
For a signed bipartite graph $B$ with vertex set $E=E_{0}\cup E_{1}$, the Groethendieck group $K_{0}(A(B)\text{-mod})|_{t=-1}$ is a $q$-lattice in the sense of Section~\ref{sec:qLattice}, with the graded Euler form at $t=-1$, and the involution $d$ of Remark~\ref{rmk:d}. We define the $q$-cut and $q$-flow lattices as the appropriate $q$-sublattices of the Grothendieck group.  After giving the definition, we will see that it is also possible to define these objects combinatorially, without any reference to the bipartite algebra $A(B)$ or its representation category. The rest of this Section focuses on the intricate combinatorial properties of these new graph invariants.

\begin{definition}
For a signed bipartite graph $B$, with vertex set $E=E_{0}\cup E_{1}$, the {\em $q$-flow lattice} $\calF_{q}(B)$ is the $\Z[q,q^{-1}]$-submodule of $K_{0}=K_{0}(A_{B}\text{-mod})|_{t=-1}$ generated by 
the classes of projective modules $\{[P_{i}]: i \in E_{1}\}$, with the inherited sesquilinear form. The involution $d$ is defined to be the $\Z$-linear map which fixes the classes $\{[P_{i}]\}$, and for which $d(q)=q^{-1}$.

Similarly, the {\em $q$-cut lattice} $\calC_{q(B)}$ is generated by the classes of simple modules
 $\{[L_{i}]: i\in E_{0}\}$, with the inherited form, and $d([L_{i}])=L_{i}$.

If $B=B_{\G,T}$ for some graph $\G$ with chosen spanning tree $T$, then we call $B$ {\em graphical}, and denote 
$\calF_{q}(B_{\G,T})=\calF_{q}(\G,T)$, and $\calC_{q}(B_{\G,T})=\calC_{q}(\G,T)$.
\end{definition}

Note that in the case of the $q$-flow lattice, the involution $d$ is the composition of the involution $d$ on $K_{0}$ with the canonical pairing-preserving isomorphism between the $\Z[q,q^{-1}]$-submodule generated by $\{[P_{i}]: i \in E_{1}\}$, and that generated by the classes of indecomposable injective modules $\{[I_{i}]: i \in E_{1}\}$.

In the classical case, for a graph (or regular matroid) $\G$, $\calF(\G)$ and $\calC(\G)$ are mutual orthogonal complements 
in the Euclidean lattice $\Z^{{E(\G)}}$. The $q$-analogue of this statement is the following:

\begin{proposition}
 In $K_{0}$, $\mathcal{C}_q(B)$ is the right orthogonal complement to $\mathcal{F}_q(B)$, and 
 $\mathcal{F}_q(B)$ is the left orthogonal complement of 
 $\mathcal{C}_q(B)$.
\end{proposition}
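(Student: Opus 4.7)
The plan rests on the biorthogonality identity
\[
\langle [P_i], [L_j] \rangle \;=\; \delta_{ij}
\]
between the projective and simple bases of $K_0$. To establish this, I would use that $P_i$ is projective, so $\Ext^k(P_i, L_j)=0$ for $k>0$ and the graded Euler form collapses to $\qdim \Hom_{A(B)}(P_i, L_j) = \qdim(e_i L_j)$. Since $L_j$ is concentrated in internal bidegree $(0,0)$, this graded dimension is $1$ when $i=j$ and $0$ otherwise. (This is essentially the observation from Example~\ref{ex:LRDualBases} that the projectives form a left dual basis to the simples.)

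With this identity in hand, one containment is immediate: for $i \in E_1$ and $j \in E_0$ one has $i \neq j$, so $\langle [P_i], [L_j] \rangle = 0$. Extending sesquilinearly, this proves $\mathcal{C}_q(B) \subseteq \mathcal{F}_q(B)^\perp$ and $\mathcal{F}_q(B) \subseteq {}^\perp \mathcal{C}_q(B)$.

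For the reverse containments, I would exploit that $\{[P_i] : i \in V(B)\}$ and $\{[L_i] : i \in V(B)\}$ are each $\Z[q,q^{-1}]$-bases for $K_0$. Given $x \in {}^\perp \mathcal{C}_q(B)$, expand uniquely as $x = \sum_{i \in V(B)} c_i(q)\,[P_i]$ with $c_i \in \Z[q,q^{-1}]$. For each $j \in E_0$, $q$-sesquilinearity combined with the biorthogonality identity yields
\[
0 \;=\; \langle x, [L_j] \rangle \;=\; \sum_{i \in V(B)} \overline{c_i(q)}\,\langle [P_i], [L_j] \rangle \;=\; \overline{c_j(q)},
\]
forcing $c_j = 0$ for every $j \in E_0$. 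Hence $x = \sum_{i \in E_1} c_i(q)\,[P_i] \in \mathcal{F}_q(B)$. The symmetric argument handles the other inclusion: given $y \in \mathcal{F}_q(B)^\perp$, expand $y = \sum_{i \in V(B)} a_i(q)\,[L_i]$ and pair on the left with each $[P_j]$ for $j \in E_1$ to obtain $a_j(q) = 0$, so that $y \in \mathcal{C}_q(B)$.

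I do not expect any real obstacle: the main content is the biorthogonality identity, which is a direct consequence of the projectivity of $P_i$ and the structure of $L_j$. The only mildly subtle point is bookkeeping with sesquilinearity---the conjugation places $\overline{c_j(q)}$ (rather than $c_j(q)$) on the right-hand side of the calculation---but since $\overline{c_j(q)} = 0$ in $\Z[q,q^{-1}]$ if and only if $c_j(q)=0$, this does not affect the conclusion.
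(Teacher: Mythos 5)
Your proof is correct and follows essentially the same route as the paper: both rest on the biorthogonality $\langle [P_i],[L_j]\rangle=\delta_{ij}$ together with the fact that the simples and the indecomposable projectives each form a $\Z[q,q^{-1}]$-basis of $K_0$, so that pairing against the $[P_i]$ (resp.\ $[L_j]$) picks out coefficients. The only difference is that you spell out the derivation of the biorthogonality from projectivity, which the paper simply recalls.
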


\begin{proof}
Recall that simple modules form a right dual basis to indecomposable projective modules with respect to the graded Euler
pairing. That is, $\langle [P_{i}],[L_{j}]\rangle=\delta_{ij}$ for all $1\leq i,j\leq n$. This implies that the right orthogonal complement $[P_{i}]^{\perp}$ of $[P_{i}]$ is the span of $\{[L_{j}]:j\neq i\}$, since the simple modules form a basis, and 
pairing with $[P_{i}]$ on the left picks out the 
coefficient of $[L_{i}]$. 
The right orthogonal complement of $\calF_{q}(B)$ is the intersection 
$\bigcap_{e_{i}\in E_{1}}[P_{i}]^{\perp}$, that is, the span of $\{[L_{j}]: e_{j}\in E_{0}\}$, which is by definition $\calC_{q}(B)$.

The same argument, using the fact that indecomposable projective modules also form a basis, 
proves that the left orthogonal complement of $\calC_{q}(B)$ is $\calF_{q}(B)$.
\end{proof}

\begin{remark}\label{rmk:qGram}
Given a signed bipartite graph $B$, with vertex set $E=E_{0}\cup E_{1}$ it is possible to define the $q$-cut and $q$-flow lattices without reference to the category $A(B)\text{-mod}$. Namely,
in the basis $\{[P_{i}]\}_{i\in E}$, the $q$-Gram matrix of the ``Euclidean $q$-lattice'' $K_{0}$ is of the form
$$G_{K_{0},\{[P_{i}]\}}=
\left[
\begin{array}{c|c}
I & C \\
\hline
C^{t} & G_{\mathcal F}
\end{array}
\right].$$
Here $I$ is the identity matrix of size $|E_{0}|$, and $C=qM_{B}$, where $M_{B}$ denotes the signed
adjacency matrix of $B$. That is, if $i\in E_{0}$ and $j \in E_{1}$ then $C_{ij}$ is zero if $i$ and $j$ are 
not adjacent, $q$ if they are adjacent via a positive edge, and $-q$ if adjacent via a negative edge of $B$. $C^{t}$ is the 
transpose of $C$. 

The matrix $G_{\mathcal F}$ is the {\em Gram matrix of the $q$-flow lattice} in the basis $\{[P_{i}]\}_{ i \in E_{1}}$, namely, for $e_{i}, e_{j}\in E_{1}$ and $\langle \cdot, \cdot \rangle_{\calF(B)}$ denoting the pairing in the classical flow lattice of $B$,
$$(G_{\calF})_{ij}=\begin{cases}
1+(\langle C_{i},C_{i}\rangle_{\calF(B)}-1)q^{2} \quad \text{when }i=j \\
\langle C_{i},C_{j}\rangle_{\calF(B)}q^{2} \quad \text{when }i\neq j. 
\end{cases}$$

The classes $[L_{i}]$ $(i\in E)$ written in the basis $\{[P_{i}]\}_{i\in E}$ are the columns of $G_{K_{0},\{[P_{i}]\}}^{-1}$. The involution $d$ on $K_{0}$ is defined by $q$-anti-linearity and fixing the elements $\{[L_{i}]: i\in E\}$

The $q$-flow lattice can be defined as the submodule spanned by $\{[P_{i}]: e_{i}\in E_{1}\}$, with the inherited form and the involution given by $d([P_{i}])=[P_{i}]$, $d(q)=q^{-1}$.

The $q$-cut lattice is the right orthogonal 
complement of the $q$-flow lattice, or equivalently, the $q$-sublattice generated by $\{[L_{i}]:i\in E_{0}\}$.
The Gram matrix of $\calC_{q} (B)$ is given in the basis $\{[L_{i}]:i\in E_{0}\}$ is given by
$$(G_{\calC})_{ij}=\begin{cases}
1+(\langle K_{i},K_{i}\rangle_{\calC(B)}-1)q^{2} \quad \text{when }i=j \\
\langle K_{i},K_{j}\rangle_{\calF(B)}q^{2} \quad \text{when }i\neq j. 
\end{cases}$$
\end{remark}

\begin{remark}
It was an arbitrary choice to define the $q$-flow lattice via the basis of indecomposable projectives, and one equally reasonable choice would have been to use indecomposable injective modules (cf Remark~\ref{rmk:FlowInjectives}). Then $\calC_{q(B)}$ would be the {\em left} orthogonal complement of $\calF_{q}(B)$ in $K_{0}$. The Gram matrix would remain the same (but in the basis $\{[I_{i}]\}_{i\in E_{1}}$), and $d$ would fix the elements $[I_{i}].$ 
\end{remark}

In classical lattice theory, if an orthonormal basis of a $\Z$-lattice exists, then this basis is unique up to
signs and permutation of the basis vectors.  Furthermore, if $v$ is an element of norm $\pm 1$, then one of $\{v,-v\}$ is an element of this basis.  There is a similar rigidity statement for a special class of $q$-lattices which have a basis satisfying certain conditions, but which need not be orthonormal. This includes in particular $q$-cut and $q$-flow lattices:

\begin{lemma}\label{lem:rigidity}
Suppose a $q$-lattice $L$ of rank $n$ has a basis $\{B_{i}: i=1,
\dots,n\}$ and that there exists $k\in \Z$ such that for all $i,j = 1,\hdots, n$:
\begin{itemize}
\item  $\langle B_{i},B_{i} \rangle= 1+ c_{ii}q^{k}$ with $c_{ii}\in \Z$, and 
\item $\langle B_{i}, B_{j}\rangle =\langle B_{j},B_{i}\rangle= c_{ij} q^{k}$ for $i\neq j$ with
$c_{ij}\in \Z$.
\end{itemize} 
Then such a basis is unique up to permutation and signs: if for any $B\in L$, $\langle B,B \rangle=1+cq^{k}$ for some $c \in \Z$,
then $B= \pm B_{i}$ for some $i$.
\end{lemma}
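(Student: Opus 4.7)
The plan is to write $B=\sum_{i=1}^{n} a_{i}B_{i}$ with $a_{i}\in\Z[q,q^{-1}]$ and analyse the equation $\langle B,B\rangle = 1+cq^{k}$ using the hypothesized shape of the Gram matrix. Since the Gram matrix $G$ of $\{B_{i}\}$ equals $I + q^{k}C$ for some symmetric integer matrix $C=(c_{ij})$, a direct sesquilinear expansion gives
\[
\langle B,B\rangle = P + q^{k}E, \qquad P := \sum_{i=1}^{n}\overline{a_{i}}a_{i}, \qquad E := \sum_{i,j=1}^{n}\overline{a_{i}}c_{ij}a_{j}.
\]
The first observation is that both $P$ and $E$ are fixed by the bar involution $f(q)\mapsto f(q^{-1})$: $P$ because each summand is manifestly bar-invariant, and $E$ because $C$ has real entries and is symmetric, so $\overline{E}$ equals $E$ after reindexing.

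Next, I would apply the bar involution to the equation $P+q^{k}E = 1+cq^{k}$ to obtain the companion identity $P+q^{-k}E = 1+cq^{-k}$, then subtract to get $(q^{k}-q^{-k})(E-c)=0$. Since $\Z[q,q^{-1}]$ is an integral domain and $q^{k}-q^{-k}\neq 0$ whenever $k\neq 0$, this forces $E=c$ and hence $P=1$. The $k=0$ case is degenerate and reduces to the classical rigidity statement for $\Z$-lattices, since the form then takes values in $\Z$.

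From $\sum_{i}\overline{a_{i}}a_{i}=1$, the remaining task is an elementary constant-term calculation. Expanding $a_{i}=\sum_{m}a_{i}^{(m)}q^{m}$ with $a_{i}^{(m)}\in\Z$, the constant coefficient of $\overline{a_{i}}a_{i}$ equals $\sum_{m}(a_{i}^{(m)})^{2}$, a non-negative integer. Hence the constant coefficient of $P$ is $\sum_{i,m}(a_{i}^{(m)})^{2}=1$, a sum of integer squares equal to $1$, which forces exactly one $a_{i_{0}}^{(m_{0})}=\pm 1$ with every other coefficient of every $a_{i}$ equal to zero. Therefore $B=\pm q^{m_{0}}B_{i_{0}}$, which is $\pm B_{i_{0}}$ up to the grading-shift action by the units $\pm q^{m}\in\Z[q,q^{-1}]$.

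The key step is the middle one, where bar-invariance of $P$ and $E$, combined with the assumed form of $\langle B,B\rangle$, pins down $P=1$; once this is in hand, the positivity argument at the constant coefficient closes the proof almost for free. The main conceptual obstacle is that the argument naturally produces $\pm q^{m_{0}}B_{i_{0}}$ rather than the literal $\pm B_{i}$ appearing in the statement: shifting a basis vector by a unit of $\Z[q,q^{-1}]$ cannot change its self-pairing, so any rigidity statement is necessarily modulo this unit action, which is the implicit meaning of ``unique up to permutation and signs'' in the $q$-lattice setting.
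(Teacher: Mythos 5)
Your proof is correct and follows essentially the same route as the paper's: expand $B$ in the basis, observe that both the ``identity part'' $P=\sum_i\overline{a_i}a_i$ and the coefficient $E$ of $q^k$ are invariant under $q\mapsto q^{-1}$, apply the bar involution and subtract to isolate $P=1$, and finish with positivity of the constant coefficient. One point where you are actually more careful than the paper: its proof asserts at the last step that the surviving coefficient is $\pm1$, whereas (as you note) the argument only forces it to be $\pm q^{m_0}$, so the conclusion is $B=\pm q^{m_0}B_{i_0}$ and ``unique up to permutation and signs'' must be read modulo the unit action of $\pm q^m$ --- a harmless ambiguity in the applications, since multiplying a basis vector by a unit does not change any self-pairing. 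Your parenthetical that the case $k=0$ ``reduces to classical rigidity'' is not right, however: for $k=0$ the hypothesis $\langle B,B\rangle=1+c$ with $c\in\Z$ unspecified imposes almost no constraint (e.g.\ $B=B_1+B_2$ in an orthonormal basis has norm $2=1+1\cdot q^0$ but is not $\pm q^mB_i$), so the lemma genuinely requires $k\neq0$; the paper's own proof also silently divides by $q^k-q^{-k}$, and in all uses here $k=2$.
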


\begin{proof}
Assume that for some $B\in L$, $\langle B,B \rangle=1+cq^{k}$, and $B=\sum_{i=1}^{n} \alpha_{i}B_{i}$ for some $\alpha_{i}\in \Z[q,q^{-1}]$. Then 
$$\langle B,B \rangle = \sum_{i,j=1}^{n} \bar{\alpha}_{i}\alpha_{j}\langle B_{i},B_{j}\rangle =
\left(\sum_{i=1}^{n} \bar{\alpha}_{i}\alpha_{i}\right) + q^{k}\left(\sum_{i=1}^{n} c_{ii} \bar{\alpha}_{i}\alpha_{i} +\sum_{i<j} 
c_{ij}(\bar{\alpha}_{i}\alpha_{j} + \bar{\alpha}_{j}\alpha_{i})\right),$$
where $\bar{\alpha}_{i}(q)=\alpha_{i}(q^{-1})$. 
Denote $$X(q)=\sum_{i=1}^{n} \bar{\alpha}_{i}\alpha_{i}, \quad \text{ and } \quad
Y(q)=\sum_{i=1}^{n} c_{ii} \bar{\alpha}_{i}\alpha_{i} +\sum_{i<j} 
c_{ij}(\bar{\alpha}_{i}\alpha_{j} + \bar{\alpha}_{j}\alpha_{i}).$$
Notice that $X$ and $Y$ are symmetric Laurent polynomials: $X(q)=X(q^{-1})$ and $Y(q)=Y(q^{-1})$.
So we have 
$$1+cq^{k}=X+Yq^{k}.$$
Substituting $q^{-1}$ for $q$ we obtain
$$1+cq^{-k}=X+Yq^{-k}.$$
Subtract the second equation from the first to get 
$$c(q^{k}-q^{-k})=Y(q^{k}-q^{-k}),$$
therefore $Y=c$ and $X=1$.

Going back to the definition of $X$, note that the constant term of $\bar{\alpha}_{i}\alpha_{i}$ is a non-negative integer, and
it is 0 if and only if $\alpha_{i}=0$. Since
$1=X(q)=\sum_{i=1}^{n} \bar{\alpha}_{i}\alpha_{i}$, it must be that $\alpha_{i}=0$ for all but one $i$, and for the one exception
$\alpha_{i}=\pm 1$. Hence, $B=\pm B_{i}$ for some $i$, and as a result such a basis is unique up to permutation and signs.
\end{proof}

\begin{definition} Let $L_{q}$ be a $q$-lattice with a given basis $\{B_{i}: i=1...n\}$. If for any element $x\in L_{q}$
the norm $\langle x,x\rangle$ determines whether $x=\pm B_{i}$ for some $i$, then we call $L_{q}$ {\em Gram-rigid}.
\end{definition}

In other words, Gram-rigid $q$-lattices have a ``canonical basis'' producing a Gram matrix of a certain form. The $q$-flow
lattice of a signed bipartite graph is Gram-rigid with the basis $\{[P_{i}]\}_{{i}\in E_{1}}$, and the $q$-cut lattice is Gram rigid with the basis $\{[L_{j}]\}_{{j}\in E_{0}}$. 

\subsection{The $q$-cut and $q$-flow lattices and 2-isomorphisms of graphs} \label{sec:SuWagner}

A theorem of Su--Wagner \cite{SW} and Caporaso--Viviani \cite{CV} states that the classical lattice of integer flows is
a {\em complete 2-isomorphism invariant} of 2-edge-connected graphs. That is, $\calF(\G_{1})\cong \calF(\G_{2})$ if and only if there exists a cycle-preserving
bijection between $E(\G_{1})$ and $E(\G_{2})$. Such a bijection is called a {\em two-isomorphism} of graphs. Dually, the lattice of integer cuts is a complete 2-isomorphism invariant of graphs without loops.  

The Su--Wagner result is stated in the context of regular matroids, where matroid isomorphism replaces 2-isomorphisms of graphs, and 2-edge-connected graphs translate to matroids without co-loops. This result is useful not only in graph theory and matroid theory but also in low-dimensional topology \cite{Gr}.  

The following $q$-analogue of this theorem shows that $\calF_{q}(\G,T)$ and $\calC_{q}(\G,T)$ are complete invariants of the two-isomorphism type of the pair $(\G,T)$, in other words, the $q$-lattices ``remember'' the spanning tree:

\begin{theorem}\label{thm:q-2-iso}

For 2-edge-connected loopless graphs $\G_{1}$ and $\G_{2}$ with respective spanning trees $T_{1}$ and $T_{2}$, the following are equivalent:

\begin{enumerate}

\item $\calF_{q}(\G_{1},T_{1}) \cong \calF_{q}(\G_{2},T_{2})$;

\item There exists a cycle-preserving bijection $F: E(\G_{1}) \to E(\G_{2})$ for which $F(T_{1})=T_{2}$;

\item $\calC_{q}(\G_{1},T_{1})\cong \calC_{q}(\G_{2},T_{2}).$

\end{enumerate}

\end{theorem}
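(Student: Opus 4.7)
My proof plan is to prove $(2) \Rightarrow (1)$, $(2) \Rightarrow (3)$, and $(1) \Rightarrow (2)$; the equivalence $(3) \Leftrightarrow (2)$ then follows by running the analogous rigidity argument for the q-cut lattice or by invoking matroid duality.

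For $(2) \Rightarrow (1)$ and $(2) \Rightarrow (3)$, the argument is based on functoriality. Given a cycle-preserving bijection $F\colon E(\G_1) \to E(\G_2)$ with $F(T_1) = T_2$, the fundamental cycle $C_i^{(1)}$ of any non-tree edge $i \in T_1^c$, namely the unique cycle in $T_1 \cup \{i\}$, is mapped by $F$ to the unique cycle in $T_2 \cup \{F(i)\}$, which is $C_{F(i)}^{(2)}$. After adjusting orientations of $\G_2$ to be compatible with those of $\G_1$ under $F$---which does not affect the isomorphism class of the bipartite algebra or the associated lattices---$F$ induces an isomorphism of signed bipartite graphs $B_{\G_1, T_1} \cong B_{\G_2, T_2}$. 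The functorial construction of Definition~\ref{def:A(B)} then gives an isomorphism $A_{\G_1, T_1} \cong A_{\G_2, T_2}$, producing isomorphisms of both the q-flow and q-cut lattices.

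The main direction is $(1) \Rightarrow (2)$. The first step is Gram rigidity: by Remark~\ref{rmk:qGram}, the Gram matrix of $\calF_q(\G_i, T_i)$ in its canonical basis $\{[P_j]\}_{j \in T_i^c}$ satisfies the hypotheses of Lemma~\ref{lem:rigidity} with $k=2$. Hence any q-lattice isomorphism $\phi\colon \calF_q(\G_1, T_1) \to \calF_q(\G_2, T_2)$ must send the canonical basis to $\pm$ the canonical basis up to permutation, yielding a bijection $F_0 \colon T_1^c \to T_2^c$ and signs $\epsilon_i \in \{\pm 1\}$ with $\phi([P_i]) = \epsilon_i [P_{F_0(i)}]$. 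Since $\phi$ preserves the sesquilinear form, one obtains $\langle C_i, C_j\rangle_{\calF(\G_1)} = \epsilon_i \epsilon_j \langle C_{F_0(i)}, C_{F_0(j)}\rangle_{\calF(\G_2)}$ for all $i, j \in T_1^c$ (via Proposition~\ref{prop:FlowCategorification} at $q=1$). The Su--Wagner / Caporaso--Viviani theorem \cite{SW, CV} provides \emph{some} cycle-preserving edge bijection from the existence of a classical flow-lattice isomorphism; the remaining task is to upgrade this to one whose restriction to $T_1^c$ is $F_0$, in which case $F(T_1) = T_2$ follows automatically.

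I expect this upgrade step to be the main obstacle. My plan is to reconstruct the signed bipartite graph $B_{\G_i, T_i}$ up to orientation equivalence from the q-flow data. Writing $M_B$ for the signed incidence matrix of the bipartite graph, the Gram matrix of $\calF_q$ encodes $M_B^{t} M_B$ up to the diagonal correction in Remark~\ref{rmk:qGram}; the equality of these pairing matrices under $F_0$, combined with uniqueness of totally unimodular (equivalently, regular matroid) representations up to signed row and column permutations, should force $M_{B_1}$ to agree with $M_{B_2}$ up to such a signed permutation. This yields an isomorphism of signed bipartite graphs extending $F_0$, and hence via Whitney's 2-isomorphism theorem applied to the underlying matroid, a cycle-preserving bijection $F$ with $F(T_1) = T_2$. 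Finally, $(3) \Leftrightarrow (2)$ is handled by the same argument for the q-cut lattice---whose canonical basis $\{[L_j]\}_{j \in T}$ is also Gram-rigid by Lemma~\ref{lem:rigidity} and Remark~\ref{rmk:qGram}---or by invoking the matroid duality of Section~\ref{subsec:Duality} to reduce to $(1) \Leftrightarrow (2)$.
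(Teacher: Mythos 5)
Your easy direction and your use of Gram rigidity (Lemma~\ref{lem:rigidity}) to extract the bijection $F_0\colon T_1^c\to T_2^c$ with $\phi([P_i])=\epsilon_i[P_{F_0(i)}]$ match the paper exactly; the problem is the ``upgrade step,'' which you correctly identify as the crux but do not actually prove. Your proposed tool --- uniqueness of totally unimodular representations up to signed row and column permutations --- does not apply as invoked: the Brylawski--Lucas unique-representability theorem compares two representations of a matroid \emph{already known to be the same}, whereas all you have is the equality of Gram matrices $M_{B_1}^tM_{B_1}=M_{B_2}^tM_{B_2}$ (after absorbing the signs $\epsilon_i$). That Gram matrix only records the signed intersection numbers $\langle C_i,C_j\rangle$, i.e.\ for graphs the \emph{sizes} of the tree-paths $C_i\cap C_j\cap T$, not which tree edges lie in which fundamental cycles; reconstructing the incidence matrix from pairwise intersection data is a genuine reconstruction problem, not a formal consequence. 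The paper's own Figure~\ref{fig:qFlowCut} exhibits two non-isomorphic signed bipartite graphs with identical $q$-flow lattices, so any such reconstruction must use regularity in an essential, nontrivial way --- which is exactly the content of the hard direction of Su--Wagner/Greene, not something you can layer on top of it as a corollary.

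What the paper does instead: it applies the \emph{strong} form of the Su--Wagner--Caporaso--Viviani theorem from the proof of \cite[Thm.~3.8]{Gr}, which says the classical isometry $\overline{\varphi}=\varphi|_{q=1}$ extends to a signed bijection $F'\colon\Z^{E(\G_1)}\to\Z^{E(\G_2)}$ of the ambient Euclidean lattices. By rigidity $F'$ carries the support of each fundamental cycle $C_i$ to that of $C_{F_0(i)}$, so any tree edge lying in at least two fundamental cycles is forced into $T_2$; the only possible failures of $F'(T_1)=T_2$ occur among edges lying in a \emph{unique} fundamental cycle $C_i$. Any two such edges appear together in every cycle of $\G_1$ (they are in series), so transposing them is a cycle-preserving automorphism, and composing $F'$ with these transpositions produces the desired $F$ with $F(T_1)=T_2$. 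This series-edge correction is the missing idea in your argument; without it (or a genuine proof of your reconstruction claim), the implication $(1)\Rightarrow(2)$ is not established. Your handling of $(2)\Rightarrow(1)$, of $(3)$ by the dual argument, and the observation that $F|_{T_1^c}=F_0$ forces $F(T_1)=T_2$ are all fine.
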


\begin{proof}

We first prove $(2) \Rightarrow (1)$. It is a classical fact that the map $F$ lifts to a map of lattices $\tilde{F}: \Z^{E(\G_{1})} \to \Z^{E(\G_{2})}$, where $\tilde{F}(e)=\pm F(e)$, such that $\tilde{F}$ restricts to an isomorphism $\overline{\varphi}: \calF(\G_{1}) \xrightarrow{\cong} \calF(\G_{2})$.
Since $F(T_{1})=T_{2}$, $\overline\varphi$ sends fundamental cycles corresponding to $T_{1}$ to fundamental cycles corresponding to $T_{2}$, and therefore it 
lifts to an isomorphism $\varphi: \calF_{q}(\G_{1},T_{1}) \xrightarrow{\cong} \calF_{q}(\G_{2},T_{2})$.

To prove $(1) \Rightarrow (2)$, given an isomorphism  
$\varphi: \calF_{q}(\G_{1},T_{1}) \xrightarrow{\cong} \calF_{q}(\G_{2},T_{2}),$ set $q=1$ to obtain an isomorphism
$\overline{\varphi}: \calF(\G_{1}) \xrightarrow{\cong} \calF(G_{2})$. By a strong version of the Su--Wagner--Caporaso--Viviani
theorem (see proof of \cite[Thm.3.8]{Gr}), $\overline{\varphi}$ extends to an isomorphism $F': \Z^{E(\G_{1})} \to \Z^{E(\G_{2})},$ which 
sends each edge of $\G_{1}$ to a signed edge of $\G_{2}$.

Forgetting signs, we obtain a cycle-preserving bijection $F': E(G_{1}) \to E(G_{2}),$ which in particular sends $T_{1}$-fundamental
cycles to $T_{2}$-fundamental cycles. Note that this does not imply that $F'$ sends $T_{1}$ to $T_{2}$. If an edge $e$ of $\G_{1}$
participates in at least two $T_{1}$-fundamental cycles, then $e$ is an edge of $T_{1}$, and $F'(e)$ an edge of $T_{2}$. 
On the other hand, if an edge $e\in T_{1}$ participates in a unique fundamental cycle, say $C_{i}$, then $F'(e)$ may be 
a non-$T_{2}$ edge of $F'(C_{i})$.

However, if two edges $e_{1}, e_{2}\in E(\G_{1})$ both only appear in the fundamental cycle $C_{i}$, then $e_{1}$ and $e_{2}$ appear
together in {\em any cycle} of $\G_{1}$, and so the transposition of $e_{1}$ and $e_{2}$ is a cycle-preserving automorphism of 
$E(\G_{1})$. Hence, $F'$ can always be composed with such transpositions to obtain a cycle preserving bijection 
$F: E(\G_{1}) \to E(\G_{2})$ which sends $T_{1}$ to $T_{2}$.

To prove $(2) \Leftrightarrow (3)$, similar arguments can be made using fundamental cuts.
\end{proof}

Note that, building on the Su-Wagner result, Theorem~\ref{thm:q-2-iso} can be generalised to regular matroids with a chosen basis, and then a duality argument implies $(2) \Leftrightarrow (3)$.

The Su--Wagner/Caporaso--Viviani Theorem implies in particular that for 2-edge-connected loopless graphs the isomorphism class of $\calF(\G)$ determines the isomorphism class of $\calC(\G)$, and vice versa.   Similarly, Theorem~\ref{thm:q-2-iso} says that for these graphs with a choice of spanning tree, the isomorphism type of $\calF_{q}(\G,T)$ determines the isomorphism type of $\calC_{q}(\G,T)$ and vice versa. 
This, however, is not true in general for the $q$-cut and $q$-flow lattices associated to signed bipartite graphs whose associated matroid is non-regular: an example is shown in Figure~\ref{fig:qFlowCut}. Note that for a bipartite graph ``2-edge-connected'' means there is no isolated vertex in $V(0)$, and ``loopless'' means there's no isolated vertex in $V(1)$.

\begin{figure}[h]

\includegraphics{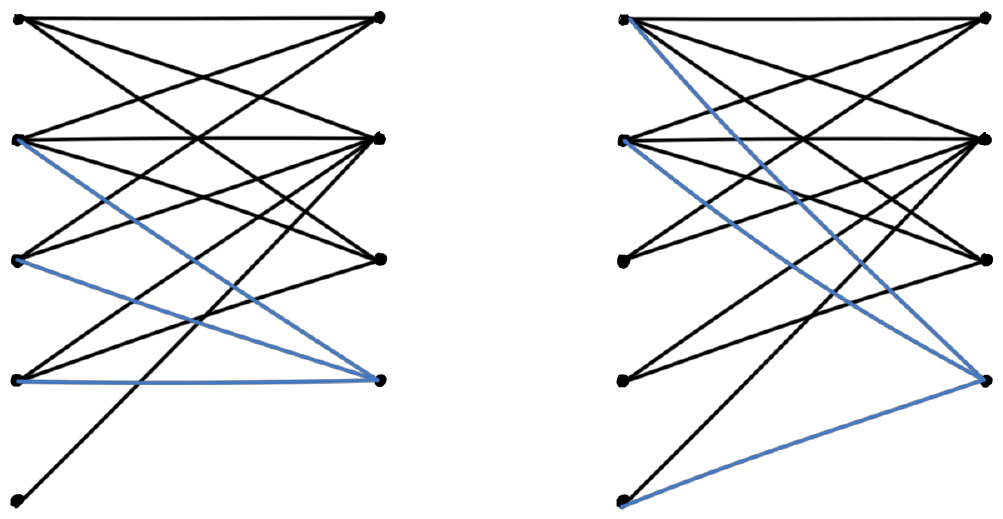}

\caption{The isomorphism class of $\calF_{q}(B)$ does not necessarily determine the isomorphism class of $\calC_{q}(B)$ for signed bipartite graphs whose associated matroid is not regular: the bipartite graphs above have identical $q$-flow lattices, but non-isomorphic $q$-cut lattices (non-isomorphism is easily seen for Gram-rigid $q$-lattices). The blue edges highlight the difference between the two graphs. }\label{fig:qFlowCut}
\end{figure}

\subsection{A $q$-Matrix-Tree theorem}

The classical integer cut and flow lattices {\em glue} together to form the Euclidean lattice $\Z^{E}$; this implies through Theorem~\ref{thm:ClassGlue} that for any signed bipartite graph $B$, $\det\calF(B) =\det\calC(B)$. Analogously, the $q$-cut and $q$-flow lattices glue to form a unimodular $q$-lattice: they embed as mutual one-sided orthogonal complements in a unimodular $q$-lattice satisfying the conditions of Theorem~\ref{thm:qGlue}.  This implies that $\det\calF_{q}(B)=\det\calC_{q}(B)$, up to units.

In graph theory, the famous Matrix-Tree Theorem states that the determinant of the classical cut and flow lattices counts the number of spanning trees for a graph. (It also applies to regular matroids, where the determinant equals the number of bases.) Our final result is a $q$-analogue of this theorem, which further illustrates how the $q$-cut and $q$-flow lattices encode information about the choice of spanning tree. This theorem also applies to regular matroids, with the same proof; here we state and prove it for graphs.

\begin{theorem}\label{thm:qMatrixTree} Given a graph $\G$ and a spanning tree $T\subseteq E(\G)$, set $r:=|T|$.  Then
$$\det \calC_{q}(\G,T)= \sum_{i=0}^{r} c_{i} q^{2i},$$ where $c_{i}$ is the number of spanning trees $T'$ of $\G$ with $|T'\cap T|=r-i$. (In particular, $c_{0}=1$.)  
\end{theorem}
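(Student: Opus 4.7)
The plan is to express the Gram matrix of $\calC_q(\G,T)$ in closed form and then expand its determinant combinatorially using Cauchy--Binet together with a standard matroid fact about totally unimodular matrices.

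First, let $N$ denote the signed $T \times T^c$ adjacency matrix of $B_{\G,T}$, so $N_{ji} = \epsilon_{ji} \in \{0,\pm 1\}$ for $j\in T$, $i \in T^c$. From the definition $K_j = j - \sum_i \epsilon_{ji} i$, the matrix $M = [\,I_T\,|\,{-N}\,]$ whose rows are the fundamental cuts satisfies $MM^t = I + NN^t$, which is therefore the classical Gram matrix of the fundamental cut basis of $\calC(\G)$. Combined with the explicit formula of Remark~\ref{rmk:qGram}, this yields the clean identity
$$G_\calC(q) \;=\; (1-q^2)I \,+\, q^2(I + NN^t) \;=\; I + q^2 NN^t.$$
Expanding the determinant by principal minors and applying the Cauchy--Binet formula gives
$$\det G_\calC(q) \;=\; \sum_{S \subseteq T} q^{2|S|} \det\bigl((NN^t)_{S,S}\bigr) \;=\; \sum_{i=0}^{r} q^{2i} \!\!\sum_{\substack{S \subseteq T,\, S' \subseteq T^c \\ |S|=|S'|=i}}\! \bigl(\det N_{S,S'}\bigr)^2,$$
so the coefficient of $q^{2i}$ is $c_i = \sum_{|S|=|S'|=i}(\det N_{S,S'})^2$.

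The essential remaining step is to identify, for each pair $(S,S')$ with $|S|=|S'|$, the quantity $(\det N_{S,S'})^2$ with the indicator that $(T\setminus S)\cup S'$ is a spanning tree of $\G$. The key observation is that $[\,I_T\,|\,{-N}\,]$ is a representing matrix of the graphic matroid $M(\G)$ with respect to the basis $T$, obtained by pivoting from the totally unimodular reduced signed incidence matrix of $\G$. Since pivoting preserves total unimodularity, each $\det N_{S,S'}$ lies in $\{0,\pm 1\}$. Expanding the $r \times r$ submatrix of $[\,I\,|\,{-N}\,]$ on columns $(T\setminus S)\cup S'$ along the identity block collapses the determinant to $\pm \det N_{S,S'}$, which is nonzero precisely when those columns are linearly independent, equivalently when $(T\setminus S)\cup S'$ is a basis of $M(\G)$, i.e., a spanning tree of $\G$. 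Combining with the coefficient formula above yields $c_i = \#\{T' \text{ a spanning tree of } \G : |T'\cap T| = r-i\}$, with $c_0 = 1$ coming from the empty pair $S=S'=\emptyset$ (the tree $T$ itself). The main obstacle is this matroid-theoretic step; it is standard for graphic matroids but deserves careful sign tracking, and the same argument applies verbatim to any oriented regular matroid with a chosen basis, yielding the matroid generalization mentioned after the statement.
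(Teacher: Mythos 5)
Your proof is correct, but it takes a genuinely different route from the paper's. The paper works in the vertex picture: it introduces a $q$-deformed reduced incidence matrix $D_0$, applies Cauchy--Binet to the $q$-Laplacian $Q_0=D_0D_0^t$ (evaluating $\det D_J=\pm q^{\#\{\text{non-}T\text{ edges in }J\}}$ for each spanning tree $J$ by a leaf-peeling cofactor expansion), and then needs a separate change-of-basis argument to identify $Q_0$ with the Gram matrix of $\calC_q(\G,T)$ in the fundamental-cut basis --- a step the paper leaves as ``a straightforward check.'' You instead stay entirely in the fundamental-cut basis, where Remark~\ref{rmk:qGram} gives the clean closed form $G_{\calC}=I+q^2NN^t$; the principal-minors expansion of $\det(I+A)$ plus Cauchy--Binet then reduces everything to showing $(\det N_{S,S'})^2$ is the indicator of $(T\setminus S)\cup S'$ being a spanning tree, which you correctly derive from the fact that $[\,I\,|\,{-N}\,]$ is the standard-form representation of $M(\G)$ obtained by pivoting the (totally unimodular) reduced incidence matrix, so all its subdeterminants lie in $\{0,\pm1\}$. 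The trade-off: the paper's route uses only elementary facts about incidence matrices but requires the extra change-of-basis verification; yours eliminates that verification and bypasses the vertex set entirely --- making the generalization to oriented regular matroids with a chosen basis immediate --- at the cost of invoking preservation of total unimodularity under pivoting, which is standard but should be cited or proved if this were written out in full. Note also that total unimodularity is genuinely needed in your argument (not just nonvanishing of $\det N_{S,S'}$), since otherwise the squared subdeterminants could exceed $1$ and overcount; you flag this correctly.
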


This proof is based on the proof of the classical Matrix-Tree Theorem presented in \cite[Chapter 6]{Bi}.

\begin{proof}
The plan for the proof is to define a matrix $Q_{0}$, and prove that on one hand, $\det Q_{0}=\sum_{i=0}^{r} c_{i} q^{2i}$, and on the other hand, that $Q_{0}$ is a $q$-Gram matrix for $\calC_{q}(\G,T)$. 

Let $D$ denote the signed ``$q$-incidence matrix'' for $(\G,T)$: the rows of $D$ are indexed by $V(\G)$, and the columns are indexed by $E(G)$, with $T$-columns preceding non-$T$ columns. The entries of $D$ are defined as follows: 
\[D_{ij}=
\begin{cases}
1 \quad \quad \textnormal{if the edge $j \in T$ ends at the vertex $i$}, \\
-1 \quad \, \textnormal{if the edge $j \in T$ begins at the vertex $i$}, \\
q \quad \quad \textnormal{if the edge $j \notin T$ ends at the vertex $i$}, \\
-q \quad \, \textnormal{if the edge $j \notin T$ begins at the vertex $i$}. \\
\end{cases}
\] 

Define $Q:=DD^{t}$. $Q$ is a symmetric matrix whose rows and columns are indexed by $V(\G)$. By a simple computation, $Q_{ii}=\alpha_{i}+\beta_{i}q^{2}$, where $\alpha_{i}$ is the number of $T$-edges incident to vertex $i$, and $\beta_{i}$ is the number of non-$T$ edges incident to vertex $i$. When $i\neq j$, let $\gamma_{ij}$ denote the number of non-$T$ edges between the vertices $i$ and $j$, in either direction. If the vertices $i$ and $j$ are also connected by a single $T$-edge in either direction, then $Q_{ij}= -1 - \gamma_{ij}q^{2}$. Otherwise $Q_{ij}= - \gamma_{ij}q^{2}$. 

Note that the rank of $D$ is $r=|T|$, since the $T$-columns are linearly independent, and there are $r+1$ rows summing to zero. Hence, $Q$ is a symmetric $(r+1)\times (r+1)$ matrix of rank $r$, so all cofactors of $Q$ are the same. (A cofactor is the determinant of any $r\times r$ submatrix.) Let $D_{0}$ denote $D$ with the last row deleted. Let $Q_{0}:=D_{0}D_{0}^{t}$. 

For any subset $J\subseteq E(\G)$, $|J|=r$, let $D_{J}$ denote the minor of $D_{0}$ which contains the columns in $J$. By the Binet-Cauchy Theorem, $\det Q_{0}=\sum_{J} \det D_{J} \cdot \det D_{J}^{t} $. Observe that $\det D_{J}=0$ if and only if $J$ contains a cycle. 

If $J$ does not contain a cycle, then, since $|J|=r$, $J$ is a spanning tree, and $\det D_{J}=\pm q^{k}$, where $k$ is the number of non-$T$ edges in $J$. To see this, observe that there is at least one vertex $l$ which is a leaf of $J$ and does not correspond to the deleted last row of $D$. The row $l$ has a single non-zero entry in $D_{J}$, which is $\pm 1$ if the single $J$-edge incident to it is also in $T$, and $\pm q$ otherwise. Use the cofactor expansion of $\det D_{J}$ according to this row, which therefore has a single non-zero component, where the cofactor corresponds to a spanning tree for $\G \setminus l$. Hence, by repeating this process, we obtain the determinant $\pm q^{k}$.

Thus, we have shown that $\det Q_{0}=\sum_{i=0}^{r} c_{i} q^{2i}$. It remains to prove that $Q_{0}$ is a Gram matrix for $\calC_{q}(\G,T)$, by exhibiting a change of basis matrix $T$ such that the Gram matrix $(G_{\calC})_{ij}$ defined in Remark~\ref{rmk:qGram} equals $T^{t}Q_{0}T$. 

For intuition, we note that the (classical) lattice of integer cuts has a basis corresponding to any set of $r$ vertices (all but one of the vertices). The basis element corresponding to a vertex $v$ is the cut given by the partition $\{v\}\cup (V(\G)\setminus v)$. As an element of $\Z^{E(\G)}$, this cut is a linear combination of the edges incident to $v$, where incoming edges appear with $+1$ coefficient and the outgoing edges with $-1$ coefficient. The basis consists of cuts corresponding to all but one of the vertices since the cut corresponding to the last vertex is the negative sum of all the others. The lattice $Q_{0}$ is the quantised version of the Gram matrix in this vertex basis, with the last vertex omitted. The change of basis matrix $T$ is the matrix whose columns are the fundamental cuts $K_{i}$ written in the vertex basis.

With the above in mind, we define the matrix $T$ as follows. Let the edges of $T$ be numbered $1,...,r$. Label the vertices of $\G$ corresponding to the rows of $D_{0}$ with $v_{1},...,v_{r}$, in the order of their appearance as rows in $D_{0}$. Let $v_{r+1}$ denote the vertex corresponding to the omitted last row of $D$. The fundamental cut $K_{i}=V_{0,i}\cup V_{1,i}$ includes $v_{r+1}$ on one side of the partition. The change of basis matrix $T$ is the $r\times r$ matrix given by:
$$
T_{ij}=
\begin{cases}
1 & \textnormal{if $v_{i}\in V_{0,j}$ and $v_{r+1}\in V_{1,j}$; } \\
-1 &\textnormal{if $v_{i}\in V_{1,j}$ and $v_{r+1}\in V_{0,j}$; } \\
0 &\textnormal{otherwise.}
\end{cases}
$$
It is then a straightforward check that $\calC_{q}(\G,T)=T^{t}Q_{0}T=(T^{t}D_{0})(D_{0}^{t}T).$
\end{proof}

\end{document}